\documentclass[twoside]{amsart}
\usepackage{amsmath,amssymb} 
\usepackage[colorlinks=true,
           urlcolor=black,
           ]{hyperref} 
\usepackage{graphicx}
\usepackage[utf8]{inputenc}
\usepackage{verbatim} 
\usepackage{color}
\usepackage[numbers,longnamesfirst]{natbib}

\newcommand{\Ht}{L^2(\Omega, L^2 (\Gamma(t)))}

\newcommand{\Vt}{L^2(\Omega, H^1 (\Gamma(t)))}

\newcommand{\Gt}{\Gamma(t)}
\newcommand{\Ght}{ \Gamma_h(t)}
\newcommand{\Ng}{\nabla_\Gamma}
\newcommand{\Ngh}{\nabla_{\Gamma_h}}
\newcommand{\Rh}{\mathcal R}
\newcommand{\R}{\mathbb{R}}

\newcommand{\mdh}{\partial^\bullet_h}
\newcommand{\md}{\partial^\bullet}
\newcommand{\vfi}{\varphi}
\newcommand{\intt}{\int_0^t}
\newcommand{\w}{\mathrm{v}}

\newcommand\restr[2]{{
  \left.\kern-\nulldelimiterspace 
  #1 
  \vphantom{\big|} 
  \right|_{#2} 
  }}

\newtheorem{lemma}{Lemma}[section]
\newtheorem{proposition}{Proposition}[section]
\newtheorem{theorem}{Theorem}[section]

\newtheorem{xrem}{Remark}[section]
\newtheorem{ass}{Assumption}[section]
\newtheorem{problem}{Problem}[section]

\numberwithin{equation}{section}

\newcounter{todocounter}
\setcounter{todocounter}{0}
\newlength{\todowidthinner}
\setlength{\todowidthinner}{\marginparwidth}
\addtolength{\todowidthinner}{-2mm}

%
\usepackage{xifthen}
\usepackage{marginnote}
\setcounter{todocounter}{0}
\DeclareRobustCommand{\MyChange}[3][\empty]{%
  {\color{#2}#3}
  \ifthenelse{\isempty{#1}}{}
  {%
    \addtocounter{todocounter}{1}%
    \ifmmode%
        {\color{#2}\text{$^{\framebox{\arabic{todocounter}}}$}}%
    \else%
        {\color{#2}\text{$^{\arabic{todocounter}}$}}%
    \fi%
    \marginpar{\textcolor{#2}{$^{\arabic{todocounter}}$\textnormal{#1}}}%
  }%
}
\definecolor{lightgray}{rgb}{0.7,0.7,0.7}
\usepackage[normalem]{ulem}

\definecolor{adcol}{rgb}{0,0.7,0.7}

\definecolor{cecol}{rgb}{0,0.7,0}




\definecolor{revisioncol}{rgb}{1,0,0}

\begin{document}

\title[ESFEM for  random advection-diffusion equations]
{Evolving surface finite element methods for  random advection-diffusion equations}

\author[A. Djurdjevac]{Ana Djurdjevac}
\address{Institut f\"ur Mathematik, 
 Freie Universit\"at Berlin, 
 14195 Berlin, Germany}
\email{ana.djurdjevac@fu-berlin.de}

\author[C. M. Elliott]{Charles M. Elliott}
\address{Mathematics Institute,
  University of Warwick,
  Coventry.
  CV4 7AL.
  UK}
\email{C.M.Elliott@warwick.ac.uk}

\author[R. Kornhuber]{Ralf Kornhuber}
\address{Institut f\"ur Mathematik, 
 Freie Universit\"at Berlin, 
 14195 Berlin, Germany}
\email{ralf.kornhuber@fu-berlin.de}

\author[T. Ranner]{Thomas Ranner}
\address{School of Computing,
  University of Leeds,
  Leeds. LS2 9JT. UK}
\email{T.Ranner@leeds.ac.uk}

\thanks{The research of TR was funded by the Engineering and Physical Sciences Research Council (EPSRC EP/J004057/1). 
The research of CME was partially supported by the Royal Society via a Wolfson
Research Merit Award and  by the
EPSRC programme grant (EP/K034154/1) EQUIP}

\subjclass[2010]{65N12, 65N30, 65C05}

%
%
%
%


\begin{abstract}
In this paper, we introduce and analyse a surface finite element discretization of  advection-diffusion equations
with uncertain coefficients on evolving hypersurfaces.
After stating unique solvability of the resulting semi-discrete problem,
we prove optimal error bounds for the semi-discrete solution and Monte-Carlo samplings of its expectation 
in appropriate Bochner spaces.
Our theoretical findings are illustrated by numerical experiments  in two and three space dimensions.
\end{abstract}

\keywords{geometric partial differential equations, surface finite elements, random advection-diffusion equation, uncertainty quantification}

\maketitle

\section{Introduction}
Surface partial differential equations, i.e.,  partial differential equations on stationary or evolving surfaces,
have become a flourishing mathematical field 
with numerous applications, e.g., in  image processing \cite{Image}, computer graphics \cite{BerCheOsh01}, 
cell biology \cite{ElliottStinnerVenkataraman12,PlaSanPad04},  and porous media~\cite{MM}.
The numerical analysis of surface partial differential equations can be traced back to the pioneering paper of Dziuk~\cite{Dz88}
on the Laplace-Beltrami equation.  Meanwhile  there are various extensions to moving hypersurfaces
such as, e.g., evolving surface finite element methods~\cite{DE07,DE13} or
trace finite element methods~\cite{ReuskenOlshanskii17},
and an abstract framework for parabolic  equations on evolving Hilbert spaces~\cite{EAS,EASapp}.

Though uncertain parameters are rather the rule than the exception in many applications
and though partial differential equations with random coefficients have been intensively studied over the last years
(cf., e.g., the monographs \cite{Spectmeth} and \cite{LPS}),
the numerical analysis of random surface partial differential equations still appears to be in its infancy.

In this paper, we present random evolving surface finite element methods
for the advection-diffusion equation
\[
  \partial^\bullet u - \nabla_\Gamma (\alpha \nabla_\Gamma u) + u \nabla_\Gamma \cdot \text{v} = f
\]
 on an evolving compact hypersurface $\Gamma(t) \subset \mathbb{R}^n$, $n=2$, $3$,  
 with  a uniformly bounded random coefficient $\alpha$  and deterministic velocity $\text{v}$
 on a compact time intervall $t \in [0,T]$.
 Here  $\partial^\bullet$  denotes the path-wise material derivative and
 $\nabla_\Gamma$ is the tangential gradient.
 While  the analysis and numerical analysis  of random advection-diffusion equations
 is well developed in the flat case~\cite{C,HS,LMM,NT},  
 to our knowledge, existence, uniqueness and regularity results for curved domains
 have been first  derived only recently in~\cite{Dj}.
Following Dziuk \& Elliott~\cite{DE07}, the space discretization is performed by random piecewise linear finite element functions 
on simplicial approximations $\Gamma_h(t)$ of the surface $\Gamma(t)$, $t \in [0,T]$. 
 We present optimal error estimates for the resulting semi-discrete scheme 
 which then provide corresponding error estimates for expectation values and Monte-Carlo approximations.
 Application of efficient solution techniques, such as adaptivity~\cite{Demlow}, 
 multigrid methods~\cite{KornhuberYserentant08},
 and Multilevel Monte-Carlo techniques~\cite{BSC,CST,CGST} is very promising but beyond the scope of this paper. 
 In our numerical experiments we investigate a corresponding fully discrete scheme
 based on an implicit Euler method and observe optimal convergence rates.

The paper is organized as follows. We start  by setting up some notation, the notion of hypersurfaces, function spaces, and material derivatives in order to derive a weak formulation of  our problem according to~\cite{Dj}.
Section~\ref{evolving polyh} is devoted to  the random ESFEM  discretization in the spirit of~\cite{DE07}
leading to the precise formulation and  well-posedness  
of our semi discretization in space presented in Section~\ref{sec:EVOLF}. 
Optimal error estimates for the approximate solution, its expectation and a Monte-Carlo approximation 
are contained in Section~\ref{sec:ERROREST}. 
The paper concludes with numerical experiments in two and three space dimensions
suggesting that our optimal error estimates extend to corresponding fully discrete schemes.

\section{Random advection-diffusion equations on evolving hypersurfaces}
Let  $(\Omega, \mathcal{F}, \mathbb{P})$ be a complete probability space with sample space $\Omega$, a $\sigma$-algebra of events $\mathcal{F}$ and a probability $\mathbb{P} \colon  \mathcal{F}  \rightarrow [0,1]$. In addition, we assume that $L^2(\Omega)$ is a separable space. For this assumption it suffices to assume that $(\Omega, \mathcal{F}, \mathbb{P})$  is separable \cite[Exercise 43.(1)]{Hal}. 
We consider a fixed finite time interval $[0,T]$, where $T \in (0,\infty).$ Furthermore, 
we denote by $\mathcal{D}((0,T); V)$   
the space of infinitely differentiable functions with values in a a Hilbert space
$V$ and compact support in $(0, T)$.

\subsection{Hypersurfaces}
We first recall some basic notions  and results concerning hypersurfaces and Sobolev spaces on hypersurfaces.
We refer to  \cite{DDE} and  \cite{DE13a} for more details.

Let $\Gamma \subset \R^{n+1}$ $(n=1,2)$ be a $\mathcal{C}^3$-compact, connected, orientable, $n$-dimensional hypersurface without boundary.
For a function $f \colon \Gamma \to \mathbb{R}$ allowing for 
a differentiable extension $\tilde{f}$
to  an open neighbourhood of $\Gamma$ in $\R^{n+1}$ we define 
the \textit{tangential gradient} by
\begin{equation} \label{eq:GRADDEF}
\nabla_\Gamma f(x) := \nabla \tilde{f}(x) - \nabla \tilde{f}(x) \cdot \nu(x) \nu(x),  \quad x \in \Gamma,
\end{equation}
where $\nu(x)$ denotes the unit normal to $\Gamma$.

Note that $\nabla_\Gamma f(x)$ is the orthogonal projection of $\nabla \tilde{f}$
onto the tangent space to $\Gamma$ at $x$ (thus a tangential vector). It depends only on the values of $\tilde{f}$ on $\Gamma$ \cite[Lemma 2.4]{DE13a}, which makes the definition \eqref{eq:GRADDEF} independent of the extension $\tilde{f}$. The tangential gradient is a vector-valued quantity and for its components we use the notation $\nabla_\Gamma f(x) = (\underline{D}_1f(x), \dots, \underline{D}_{n+1}f(x)).$
The \textit{Laplace-Beltrami} operator is defined by
\[
\Delta_\Gamma f(x) = \nabla_\Gamma \cdot \nabla_\Gamma f(x) =
 \sum_{i=1}^{n+1}\underline{D}_i \underline{D}_i f(x), \quad x \in \Gamma.
 \]

 In order to prepare  weak formulations of PDEs on $\Gamma$, 
 we now introduce Sobolev spaces on surfaces. To this end, let $L^2(\Gamma)$ denote the 
 Hilbert space
 of all measurable functions $f \colon \Gamma \rightarrow \R$ such that
$\| f \|_{L^2(\Gamma)} := \left(\int_\Gamma |f(x)|^2\right)^{1/2}$ is finite.
We say that a function $f \in L^2(\Gamma)$ has a weak partial derivative $g_i = \underline{D}_i f \in L^2(\Gamma), \, (i=\{1,\dots,n+1\})$, 
if for every function $\phi \in \mathcal{C}^1(\Gamma)$ and every $i$ there holds
\[
  \int_\Gamma f \underline{D}_i \phi = -\int_\Gamma \phi g_i + \int_\Gamma f \phi H \nu_i
\]
where $H=  - \nabla_\Gamma  \cdot  \nu$ denotes the mean curvature.
The Sobolev space $H^1(\Gamma)$ is then defined by
\[
H^1(\Gamma) = \{ f \in L^2(\Gamma) \mid \underline{D}_i  f \in L^2(\Gamma), \; i=1,\dots,n+1 \}
\]
with the norm
$\|f\|_{H^1(\Gamma)} = (\|f\|_{L^2(\Gamma)}^2 + \|\nabla_\Gamma f\|_{L^2(\Gamma)}^2)^{1/2}$.

For a description of evolving hypersurfaces we consider two approaches,
starting with evolutions according to a given velocity field $\w$. 
Here, we assume that  $\Gamma(t)$ satisfies the same properties 
as $\Gamma(0)=\Gamma$ for every $t \in [0,T]$, and we set $\Gamma_0 := \Gamma(0)$. 
Furthermore, we assume the existence of a flow, i.e.,  of a diffeomorphism
\[
\Phi^0_t(\cdot):={\Phi}(\cdot,t) \colon   \Gamma_0 \rightarrow \Gamma(t), \quad 
\Phi \in \mathcal{C}^1([0,T], \mathcal{C}^1(\Gamma_0)^{n+1}) \cap \mathcal{C}^0([0,T], \mathcal{C}^3(\Gamma_0)^{n+1}),
\]
that satisfies
\begin{equation} \label{VELOCITY}
\frac{d}{dt}\Phi^0_t(\cdot) = \w(t,\Phi^0_t(\cdot)), \qquad
\Phi^0_0(\cdot) = \text{Id}(\cdot),
\end{equation}
with a  $\mathcal{C}^2$-velocity field $\w \colon [0,T] \times \R^{n+1} \rightarrow \R^{n+1}$
with uniformly bounded divergence
\begin{equation}
| \nabla_{\Gamma(t)} \cdot \w(t)| \leq C \quad \forall t \in [0,T]. \label{bound of velocity}
\end{equation}

It is  sometimes convenient to alternatively represent $\Gt$ as the zero level set
of a suitable function defined on a subset of the ambient space $\mathbb{R}^{n+1}$.
More precisely, under the given regularity assumptions for $\Gt$,  it follows by the Jordan-Brouwer theorem 
that $\Gt$ is the boundary of an open bounded domain. 
Thus, $\Gt$ can be represented as the zero level set 
\begin{equation*}
\Gt = \{ x \in \mathcal{N}(t) \,| \,d(x,t) = 0\}, \quad t \in [0,T],
\end{equation*}
of a signed distance function $d=d(x,t)$ 
defined on  an open neighborhood  $\mathcal{N}(t)$ of $\Gt$ such that $|\nabla d| \neq 0$ for $t \in [0,T]$.
Note that  $d$, $d_t$, $d_{x_i}$, $d_{x_ix_j} \in \mathcal{C}^1(\mathcal{N}_T)$ with $i$, $j = 1, \dots, n+1$
holds for
\[
\mathcal{N}_T := \bigcup_{t\in[0,T]} \mathcal{N}(t) \times \{t\}.
\] 
We also choose $\mathcal{N}(t)$ such that for every $x \in \mathcal{N}(t)$ and $t \in [0,T]$
there exists a unique $p(x,t) \in \Gt$ such that
\begin{equation}\label{projection}
x = p(x,t) + d(x,t) \nu (p(x,t),t),
\end{equation}
and fix the orientation of $\Gt$ 
by choosing the normal vector field $\nu(x,t) := \nabla d(x,t)$.
Note that the constant extension of a function 
$\eta (\cdot,t)  \colon \Gt \rightarrow \mathbb{R}$ 
to $\mathcal{N}(t)$ in normal direction is given by 
$\eta^{-l}(x,t) = \eta(p(x,t),t)$, $p \in \mathcal{N}(t)$.
Later on, we will use \eqref{projection}  
to define the lift of functions on approximate hypersurfaces.

\subsection{Function spaces}
In this section, we define Bochner-type function spaces of random functions that are defined on evolving spaces.
The definition of these spaces is taken from \cite{Dj}  and uses the idea from Alphonse et al.~\cite{EAS} 
to map each domain at time $t$  to the fixed initial domain $\Gamma_0$ by a pull-back operator using the flow $\Phi_t^0$.
Note that this approach is similar to Arbitrary Lagrangian Eulerian (ALE) framework.

For each $t \in [ 0, T ]$, let us define
\begin{align}
V(t) &:= {\Vt} \cong L^2(\Omega) \otimes H^1(\Gamma(t))  \label{ten prod}\\
H(t) &:= {\Ht} \cong L^2(\Omega) \otimes L^2(\Gamma(t)) \label{ten prod1}
\end{align}
where the isomorphisms hold because all considered spaces are separable Hilbert spaces (see \cite{RS}). The dual space of $V(t)$ is the space $V^*(t) = L^2(\Omega, H^{-1} (\Gamma(t)))$, where $H^{-1} (\Gamma(t))$ is the dual space of $H^1 (\Gamma(t))$.
Using the tensor product structure of these spaces \cite[Lemma 4.34]{Hack}, it follows that $V(t) \subset H(t) \subset V^*(t)$ is a Gelfand triple for every $t \in [0,T]$.
For convenience we will often (but not always) write $u(\omega,x)$ instead of $u(\omega)(x)$, which is justified by the tensor structure of the spaces.

For an evolving family of Hilbert spaces $X= (X(t))_{t\in[0,T]}$, such as, e.g., $V=(V(t))_{t\in[0,T]}$ or $H=(H(t))_{t\in[0,T]}$ 
we connect the space $X(t)$ for fixed $t \in [0,T]$ with the initial space $X(0)$ by using a family of
so-called pushforward maps $\phi_t \colon X(0) \rightarrow X(t)$, satisfying certain compatibility conditions stated in 
\cite[Definition 2.4]{EAS}.
More precisely, we use its inverse map $\phi_{-t} : X(t) \rightarrow X(0)$, called pullback map,
to define general Bochner-type spaces of functions defined on evolving spaces as  follows 
(see \cite{EAS,Dj})
\begin{align*}
L^2_X &:= \left \{ u: [0,T] \ni t \mapsto (\bar{u}(t),t) \in  \!\!\! \underset{t\in[0,T]}{\bigcup}\!
X(t) \times \{t\}  
\mid \phi_{- (\cdot) } \bar{u}(\cdot ) \in L^2(0,T;X(0)) \right \},  \\
L^2_{X^*} &:= \left \{ f: [0,T]  \ni t  \mapsto  (\bar{f}(t),t) \in  \!\!\! \underset{t\in[0,T]}{\bigcup}\!X^*(t) \times \{t\}
\mid 
\phi_{- (\cdot)}\bar{f}(\cdot) \in L^2(0,T;X(0)^*) \right \} .
\end{align*}
In the following we will identify $u(t) = (\overline{u}(t); t)$ with $\overline{u}(t)$.

From \cite[Lemma 2.15]{EAS} it follows that $L^2_{X^*}$ and $(L^2_X)^*$ are isometrically isomorphic.
The spaces $L^2_X$ and $L^2_{X^*}$ are separable Hilbert spaces \cite[Corollary 2.11]{EAS} with the inner product defined as
\[
(u,v)_{L^2_X} = \int_{0}^{T}(u(t),v(t))_{X(t)} \, \mathrm{d} t \quad
(f,g)_{L^2_{X^*}} = \int_{0}^{T}(f(t),g(t))_{X^*(t)} \, \mathrm{d} t.
\]

For the evolving family $H$ defined in  \eqref{ten prod1}
we define the pullback operator  $\phi_{-t}: H(t)  \rightarrow H(0)$ for fixed $t\in [0,T]$ and each $u \in H(t)$ by 
\begin{equation*}
(\phi_{-t}u)(\omega, x) := u(\omega, \Phi_t^0(x)), \qquad  x \in \Gamma_0=\Gamma(0), \; \omega \in \Omega,
\end{equation*}
utilizing the parametrisation $\Phi_t^0$ of $\Gamma(t)$ over $\Gamma_0$. 
Exploiting $V(t)\subset H(t)$, the pullback operator  $\phi_{-t}: V(t) \rightarrow V(0)$ is defined by restriction.
It follows from \cite[Lemma 3.5]{Dj}  that the resulting spaces $L^2_V$, $L^2_{V^*}$ and $L^2_H$ are well-defined and
\begin{equation*}
L^2_V \subset L^2_H \subset L^2_{V^*}
\end{equation*}
is a Gelfand triple.

\subsection{Material derivative}
Following~\cite{Dj}, we introduce a material derivative of sufficiently smooth random functions that takes spatial movement into account.

First let us define the spaces of  pushed-forward continuously differentiable functions
\[
 \mathcal{C}_{X}^j := \{ u \in L^2_{X}  \mid
 \phi_{-(\cdot)}u(\cdot) \in \mathcal{C}^j\left([0,T],X(0)\right)
 \} \quad \text{for } j \in \{0, 1, 2\}.
\]
For $u \in \mathcal{C}_V^1$ the material derivative $\md u \in \mathcal{C}^0_V$ is defined by
\begin{equation}\label{material derivative}
\md{u} := \phi_t \left(\frac{d}{dt}\phi_{-t}u \right) = u_t + \nabla u  \cdot \w.
\end{equation}
More precisely, the material derivative of $u$ is defined via a smooth extension   $\tilde{u}$ of $u$  to $\mathcal{N}_T$ 
with well-defined derivatives $\nabla  \tilde{u}$ and $\tilde{u}_t$ and subsequent restriction to 
\[
\mathcal{G}_T := \bigcup_t \Gt \times \{t\} \subset \mathcal{N}_T.
\]
Since,  due to the smoothness of $\Gamma(t)$ and $\Phi_0^t$,
this definition is independent of the choice of particular extension  $\tilde{u}$, 
we simply write $u$ in \eqref{material derivative}.

\begin{xrem}\label{rem:WS}
Replacing classical derivatives  in time by weak derivatives
leads to a weak material derivative $\md{u}\in L^2_{V*}$. It coincides
with the strong material derivative for sufficiently smooth functions.
As we will concentrate on the smooth case later on, 
we omit a precise definition here and refer 
to \cite[Definition 3.9]{Dj} for details.
\end{xrem}

\subsection{Weak formulation and well-posedness} \label{subsec:WFWP}

We consider an initial value problem for an advection-diffusion equation on the evolving surface $\Gamma(t)$, $t \in [0,T]$,
which in strong form reads
\begin{equation}\label{strong form}
\begin{aligned}
\partial^\bullet u - \Ng \cdot (\alpha \Ng u) + u \Ng \cdot \w &= f &&   \\
u(0) &= u_0
\end{aligned} .
\end{equation}
Here the diffusion coefficient $\alpha$ and the initial function $u_0$ are random functions,
and we set $f\equiv 0$ for ease of presentation.

We will consider weak solutions of \eqref{strong form} from the space
\begin{equation} \label{solsp}
W(V,H):= \{ u \in L^2_V \, | \,\md{u} \in L^2_H \}
\end{equation}
where $\md{u}$ stands for the weak material derivative.
$W(V,H)$ is a separable Hilbert space with the inner product defined by
\begin{equation*}
(u,v)_{W(V,H)} = \int_0^T \int_\Omega (u, v)_{H^1(\Gamma(t))} + \int_0^T \int_\Omega (\md{u}, \md{v})_{L^{2}(\Gamma(t))}.
\end{equation*}

Now a \emph{a weak solution}  of (\ref{strong form}) is a solution of the following problem.
\begin{problem}[Weak form of the random advection-diffusion equation on $\{\Gamma(t)\}$] \label{prob:WF1}
Find $u \in W(V,H)$ that point-wise satisfies the initial condition $u(0)=u_0 \in V(0)$ and  
\begin{equation} \label{weak form}
\int_\Omega \! \int_{\Gamma(t)} \!\! \partial^\bullet u(t) \varphi + \int_\Omega \! \int_{\Gamma(t)}\!\! \alpha(t) \Ng u(t) \cdot \Ng \varphi + \int_\Omega \! \int_{\Gamma(t)} \!\!\! u(t) \varphi \Ng \cdot \w (t) =0,
 \end{equation}
for every $\varphi \in L^2(\Omega, H^1(\Gamma(t)))$ and a.e.  $t \in [0,T]$.
\end{problem}

Existence and uniqueness can be stated on the following assumption.
\begin{ass} \label{ass1}  The diffusion coefficient $\alpha$ satisfies the following conditions
\begin{itemize}
\item[a)]  $\alpha \colon \Omega \times\mathcal{G}_T \rightarrow \R$  is a $\mathcal{F} \otimes \mathcal{B}(\mathcal{G}_T )$-measurable.

\item[b)] $\alpha(\omega, \cdot, \cdot) \in \mathcal{C}^1(\mathcal{G}_T)$ holds for $\mathbb{P}$-a.e $\omega \in \Omega$,  
which implies boundedness of $|\md{\alpha}(\omega)|$ on $\mathcal{G}_T$,
and we assume that this bound is uniform in $\omega\in \Omega$. 

\item[c)]  $\alpha$ is uniformly bounded from above and below in the sense 
that there exist  positive constants $\alpha_{\min}$ and $\alpha_{\max}$ such that 
\begin{equation}\label{uniform}
0 < \alpha_{\min} \leq \alpha (\omega,x,t) \leq \alpha_{\max} < \infty \quad \forall (x,t) \in \mathcal{G}_T
\end{equation}
holds for $\mathbb{P}$-a.e. $\omega \in \Omega$
\end{itemize}
and the initial function satisfies $u_0 \in L^2(\Omega, H^1(\Gamma_0))$.
\end{ass}

The following proposition is a consequence of  \cite[Theorem 4.9]{Dj}.
\begin{proposition} \label{prop:EXUNIQ}
Let Assumption \ref{ass1} hold. 
Then, under the given assumptions on $\{\Gamma(t)\}$, 
there is a unique solution $u \in W(V,H)$ 
of Problem~\ref{prob:WF1}  and we have the \textit{a priori} bound 
\begin{equation*}
\|u \|_{W(V,H)} \leq C \| u_0 \|_{V(0)}
\end{equation*} 
with   some $C\in \mathbb{R}$.
\end{proposition}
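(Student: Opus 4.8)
The plan is to cast Problem~\ref{prob:WF1} as an abstract linear parabolic problem on the Gelfand triple $L^2_V \subset L^2_H \subset L^2_{V^*}$ and to deduce the statement from the abstract well-posedness theory for evolving Hilbert spaces, i.e.\ from \cite[Theorem 4.9]{Dj}. Introducing for each $t$ the bilinear form
\[
a(t; u, \varphi) := \int_\Omega \int_{\Gt} \alpha(t)\, \Ng u \cdot \Ng \varphi + \int_\Omega \int_{\Gt} u\, \varphi\, \Ng \cdot \w(t),
\]
equation \eqref{weak form} becomes $(\md u(t), \varphi)_{H(t)} + a(t; u(t), \varphi) = 0$ for all $\varphi \in V(t)$ and a.e.\ $t$. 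The substance of the proof is then to verify the hypotheses of that abstract theorem: continuity and a G\aa rding-type coercivity of $a(t;\cdot,\cdot)$, uniform in $t$ and in $\omega$, together with the requisite regularity of $t \mapsto a(t;\cdot,\cdot)$ in the evolving-space sense.

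First I would check continuity: by the upper bound $\alpha_{\max}$ from Assumption~\ref{ass1}c, the uniform divergence bound \eqref{bound of velocity}, and Cauchy--Schwarz, one obtains $|a(t;u,\varphi)| \le C\,\|u\|_{V(t)}\|\varphi\|_{V(t)}$ with $C$ independent of $t$ and $\omega$. For coercivity, the lower bound $\alpha_{\min}$ gives $a(t;u,u) \ge \alpha_{\min}\|\Ng u\|_{H(t)}^2 - C\|u\|_{H(t)}^2$, the subtracted term absorbing the divergence contribution; adding a suitable multiple of $\|u\|_{H(t)}^2$ yields the G\aa rding inequality $a(t;u,u) + \beta \|u\|_{H(t)}^2 \ge \lambda \|u\|_{V(t)}^2$ with $\lambda,\beta>0$.

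The remaining hypothesis concerns differentiability of the transported form in time; the relevant time-derivative produces terms involving $\md\alpha$ and $\Ng\cdot\w$, which are uniformly bounded by Assumption~\ref{ass1}b and \eqref{bound of velocity}, while measurability in $\omega$ follows from Assumption~\ref{ass1}a and the tensor-product structure \eqref{ten prod}--\eqref{ten prod1}. Granting these, \cite[Theorem 4.9]{Dj} delivers the unique $u \in W(V,H)$. For the a priori bound I would, at the Galerkin level, test with $\varphi = u(t)$ and use the transport theorem on $\Gt$ to rewrite $\int_{\Gt}\md u\, u$ as $\tfrac{1}{2}\tfrac{d}{dt}\|u\|^2_{L^2(\Gt)} - \tfrac{1}{2}\int_{\Gt} u^2\, \Ng\cdot\w$; coercivity and Gr\"onwall's lemma then yield a bound on $\|u\|_{L^2_V}$ and on $\sup_t \|u(t)\|_{H(t)}$. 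To bound $\md u$ in $L^2_H$, and thereby obtain the full $W(V,H)$-norm, I would instead test with $\md u(t)$, which forces differentiation of $a(t;u(t),u(t))$ in time and brings in $\md\alpha$; here the $V(0)$-regularity of $u_0$ is exactly what lets the estimate close in $\|u_0\|_{V(0)}$.

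The step I expect to be the main obstacle is this higher-order estimate giving $\md u \in L^2_H$: because both the metric on $\Gt$ and the coefficient $\alpha$ depend on $t$, testing with $\md u$ generates transport-type commutator terms whose uniform control (in $\omega$) relies precisely on Assumption~\ref{ass1}b and \eqref{bound of velocity}. By contrast, the continuity and coercivity checks and the basic energy estimate are routine once the transport formula is available.
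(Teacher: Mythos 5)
Your proposal takes essentially the same route as the paper: the paper's entire proof is the single observation that the proposition is a consequence of \cite[Theorem 4.9]{Dj}, which is exactly the well-posedness result you invoke after recasting Problem~\ref{prob:WF1} on the Gelfand triple $L^2_V \subset L^2_H \subset L^2_{V^*}$. Your added verifications (continuity, the G\aa rding inequality, and the testing-with-$\md u$ estimate that uses $u_0 \in V(0)$ to obtain $\md u \in L^2_H$) are correct and simply spell out what that citation encapsulates.
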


The following assumption of the diffusion coefficient 
will ensure regularity of the solution.

\begin{ass} \label{ass2}
Assume that there exists a constant $C$ independent of $\omega\in \Omega$ such that
\[
|\Ng \alpha(\omega, x,t)| \leq C \quad \forall (x,t) \in \mathcal{G}_T
\]
holds for $\mathbb{P}$-almost all $\omega \in \Omega$.
\end{ass}

Note that (\ref{uniform}) and Assumption \ref{ass2} 
imply that $\|\alpha(\omega,t)\|_{C^1(\Gamma(t))}$
is uniformly bounded in $\omega\in \Omega$. This will be used later to prove an $H^2(\Gamma(t))$ bound.
In the subsequent error analysis, we will assume further that 
$u$ has a path-wise strong material derivative,
 i.e. that  $u(\omega) \in C^1_{V}$ holds for all $\omega \in \Omega$.

In order to derive a more convenient formulation of  Problem~\ref{prob:WF1} 
with identical solution and test space,
we introduce the time dependent bilinear forms
\begin{equation} \label{eq:CONTBIL}
\begin{array}{cc}
 \displaystyle  m(u,\varphi) := \! \int_\Omega \! \int_{\Gamma(t)} \!\!\!u  \varphi, \quad
 g(\w;u,\varphi) :=\! \int_\Omega \!\int_{\Gamma(t)} \!\!\!u\varphi \Ng \cdot \w , \\[4mm]
  \displaystyle a(u,\varphi) := \! \int_\Omega \! \int_{\Gamma(t)} \!\!\! \alpha \Ng u \cdot \Ng \varphi,  \quad
  b(\w;u,\varphi) := \int_\Omega \int_{\Gamma(t)} B(\omega, \w) \Ng u \cdot \Ng \varphi 
\end{array}
\end{equation}
for $u, \varphi \in L^2(\Omega, H^1(\Gamma(t)))$ and each $t\in [0,T]$.
The tensor $B$ in the definition of $  b(\w;u,\varphi)$ takes the form
\[
B(\omega, \w) =(\md{\alpha} + \alpha \Ng \cdot \w)\text{Id}  - 2\alpha D_\Gamma(\w)
\]
with ${\rm Id}$ denoting the identity in $(n+1)\times (n+1)$ and $(D_\Gamma \w)_{ij} = \underline{D}_j \w^i$. 
Note that (\ref{bound of velocity}) and  the uniform  boundedness 
of  $\md{\alpha}$ on $\mathcal{G}_T$ imply that
$|B(\omega,\w)| \leq C$ holds $\mathbb{P}$-a.e. $\omega\in \Omega$
with some $C \in \mathbb{R}$.

The transport formula for the differentiation of the time dependent surface integral then reads (see e.g. \cite{Dj})
\begin{align}\label{eq:TRANSP}
\frac{d}{dt}m(u,\varphi) &= m(\md{u},\varphi) + m(u,\md{\varphi}) + g(\w; u, \varphi) ,
\end{align}
where the  equality holds a.e.~in $[0,T]$.
As a consequence of  \eqref{eq:TRANSP},   Problem~\ref{prob:WF1} 
is equivalent to the following formulation with identical solution and test space.
\begin{problem}[Weak form of the random advection-diffusion equation on $\{\Gamma(t)\}$] \label{prob:WF2}
Find $u \in W(V,H)$ that point-wise satisfies the initial condition $u(0)=u_0 \in V(0)$ and 
\begin{equation}\label{3.8}
\frac{d}{dt}m(u,\varphi) + a(u,\varphi) = m(u,\md{\varphi}) \qquad \forall \varphi \in W(V,H).
\end{equation}
\end{problem}

This formulation will be used in the sequel.

\section{Evolving simplicial surfaces}\label{evolving polyh}
As a first step towards a discretization of the weak formulation~\eqref{3.8} 
we now consider simplicial approximations
of the evolving surface $\Gamma(t)$, $t\in [0,T]$.
Let $\Gamma_{h,0}$ be an approximation of $\Gamma_0$
consisting of nondegenerate simplices $\{E_{j,0}\}_{j=1}^N =: {\mathcal T}_{h,0}$ 
with vertices $\{X_{j,0}\}_{j = 1}^J \subset \Gamma_0$
such that the intersection of two different simplices 
is a common lower dimensional simplex or empty. 
For $t \in [0,T]$, we let the vertices $X_j(0) = X_{j,0}$
evolve with the smooth surface velocity  $X'_j(t) = \w(X_j(t),t)$, $j=1,\dots,J$,  
and consider the approximation $\Gamma_h(t)$ of $\Gamma(t)$
consisting of the corresponding simplices $\{E_j(t)\}_{j=1}^M =: {\mathcal T}_{h}(t)$.
We assume that
shape regularity of  ${\mathcal T}_{h}(t)$ holds uniformly in $t\in [0,T]$ and that 
${\mathcal T}_{h}(t)$ is quasi-uniform, uniformly in time, in the sense that
\[
h := \sup_{t \in (0,T)} \max_{E(t) \in \mathcal{T}_h(t)} \text{diam} \, E(t) \geq  \inf_{t \in (0,T)}  \min_{E(t) \in \mathcal{T}_h(t)} \text{diam} \,  E(t) \geq c h
\]
holds with some $c \in \R$.
We also assume that  $\Gamma_h(t)\subset \mathcal{N}(t)$ for $t\in [0,T]$ 
and, in addition to \eqref{projection}, that for every $p\in \Gt$ there is a unique
$x(p,t) \in \Ght$ such that 
\begin{equation} \label{eq:PROPP}
 p= x(p,t)+ d(x(p,t),t)\nu(p,t).
\end{equation}
Note  that $\Ght$ can be considered as interpolation of $\Gt$ in $\{X_{j}(t)\}_{j = 1}^J$
and a discrete analogue of the space time domain $\mathcal{G}_T$ is given by
  \begin{equation*}
    \mathcal{G}_T^h := \bigcup_t \Ght \times \{ t \}.
  \end{equation*}

We define the  tangential gradient of a 
 sufficiently smooth function $\eta_h\colon \Ght  \rightarrow \mathbb{R}$ in an element-wise sense, i.e., we set
\begin{equation*}
\nabla_{\Gamma_h} \eta_h |_E = \nabla \eta_h - \nabla \eta_h \cdot \nu_h \nu_h, \qquad E \in \mathcal{T}_h(t).
\end{equation*}
Here $\nu_h$ stands for the element-wise  outward unit normal to  $E\subset \Gamma_h(t)$.
We use the notation $\nabla_{\Gamma_h} \eta_h = (\underline{D}_{h,1}\eta_h, \dots, \underline{D}_{h,n+1}\eta_h)$.

We define the discrete velocity $V_h$ of $\Ght$ by interpolation 
of the given velocity~$\w$, i.e. we set
\[
V_h(X(t),t) := \tilde{I}_h \w(X(t),t), \qquad X(t) \in \Gamma_h(t),
\]
with $\tilde{I}_h$ denoting piecewise linear interpolation in $\{X_{j}(t)\}_{j = 1}^J$. 

We consider the Gelfand triple on $\Gamma_h(t)$
\begin{equation} \label{eq:DGEL}
L^2(\Omega, H^1(\Gamma_h(t))) \subset L^2(\Omega, L^2(\Gamma_h(t))) \subset L^2(\Omega, H^{-1}(\Gamma_h(t)))
\end{equation}
and denote
\[
\mathcal{V}_h(t) := L^2(\Omega, H^1(\Gamma_h(t))) \quad\text{and}\quad
\mathcal{H}_h(t) := L^2(\Omega, L^2(\Gamma_h(t))).
\]
As in the continuous case,  this leads to the following Gelfand triple of evolving Bochner-Sobolev spaces
\begin{equation} \label{discGT}
L^2_{\mathcal{V}_h(t)} \subset L^2_{\mathcal{H}_h(t)} \subset L^2_{\mathcal{V}^*_h(t)}. 
\end{equation}

The discrete velocity $V_h$ induces a discrete strong material derivative
in terms of an element-wise version of (\ref{material derivative}), i.e.,
for sufficiently smooth  functions  $\phi_h \in L^2_{\mathcal{V}_h}$ and any $E(t) \in \Ght$
we set
\begin{equation} \label{eq:DSMD}
\partial_h^\bullet \phi_h |_{E(t)} := (\phi_{h,t} + V_h \cdot \nabla \phi_h) |_{E(t)}.
\end{equation}

We define discrete analogues to the bilinear forms introduced in 
\eqref{eq:CONTBIL} on $\mathcal{V}_h(t) \times \mathcal{V}_h(t)$ according to
\begin{multline*}
m_h(u_h, \varphi_h) := \int_\Omega \int_{\Gamma_h(t)} \!\!u_h \varphi_h, \qquad
g_h(V_h;u_h, \varphi_h) := \int_\Omega \int_{\Gamma_h(t)} u_h \varphi_h\nabla_{\Gamma_h} \cdot V_h,\\
a_h(u_h, \varphi_h) := \int_\Omega \int_{\Gamma_h(t)} \!\! \alpha^{-l} \nabla_{\Gamma_h}u_h \cdot \nabla_{\Gamma_h}\varphi_h, \\
b_h(V_h;\phi,U_h) :=\sum\limits_{E(t) \in \mathcal{T}_h(t)} \int_\Omega \int_{E(t)} B_h(\omega, V_h) \Ngh \phi \cdot \Ngh U_h
\end{multline*}
involving  the tensor
\begin{equation*}
B_h(\omega, V_h) = (\mdh {\alpha^{-l}} + \alpha^{-l} \Ngh \cdot V_h){\rm Id}  - 2\alpha^{-l} D_h(V_h)
\end{equation*}
denoting $(D_h(V_h))_{ij} = \underline{D}_{h,j} V_h^i$.
  Here, we denote  
  \begin{equation}\label{invlift}
  \alpha^{-l}(\omega,x,t) := \alpha(\omega, p(x,t), t) \quad \omega \in \Omega, \, \ (x,t) \in \mathcal{G}_T^h
  \end{equation}
 exploiting $\{\Gamma_h(t)\}\subset {\mathcal N}(t)$ and \eqref{projection}. 
 Later $\alpha^{-l}$ will be called the inverse lift of $\alpha$. 

Note that $\alpha^{-l}$ satisfies a discrete version of Assumption~\ref{ass1} and \ref{ass2}.
In particular, $\alpha^{-l}$ is an $\mathcal{F} \otimes \mathcal{B}({\mathcal{G}_T^h})$-measurable
function, $\alpha^{-l}( \omega, \cdot, \cdot )|_{E_T} \in \mathcal{C}^1(E_T)$ for all 
space-time elements $E_T := \bigcup_{t} E(t) \times \{ t \}$, and 
$\alpha_{\min} \le \alpha^{-l}( \omega, x, t ) \le \alpha_{\max}$ 
for all $\omega \in \Omega$, $(x,t) \in \mathcal{G}_T^h$.

The next lemma provides a uniform bound for the divergence of $V_h$ 
and the norm of the tensor $B_h$ 
that follows from the geometric properites of $\Gamma_h(t)$ in analogy to \cite[Lemma~3.3]{ER}.

\begin{lemma}\label{boundBh}
Under the above assumptions on $\{\Ght\}$, it holds
\begin{equation*}
\sup_{t \in [0,T]} \left( \| \Ngh \cdot V_h \|_{L^ \infty(\Ght)}  + \| B_h\|_{L^2(\Omega,L^ \infty(\Ght))} \right) \leq c \sup_{t \in [0,T]} \| \w(t)\|_{\mathcal{C}^2 (\mathcal{N}_T)}
\end{equation*}
with a constant $c$ depending only on  the initial hypersurface $\Gamma_0$ and 
the uniform shape regularity and quasi-uniformity of ${\mathcal T}_h(t)$.
\end{lemma}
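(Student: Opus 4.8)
The plan is to reduce the assertion to deterministic, element-wise geometric estimates that are uniform in $\omega \in \Omega$, so that the probabilistic layer becomes trivial. Indeed, since $\mathbb{P}(\Omega) = 1$, any pointwise bound $|B_h(\omega, x, t)| \le C$ valid for $\mathbb{P}$-a.e.\ $\omega$ and all $(x,t) \in \mathcal{G}_T^h$ yields $\|B_h\|_{L^2(\Omega, L^\infty(\Ght))} \le C$ directly, using the $\mathcal{F} \otimes \mathcal{B}(\mathcal{G}_T^h)$-measurability of $\alpha^{-l}$ already recorded after \eqref{invlift}. It therefore suffices to bound, uniformly in $t$ and in $\omega$, the two quantities $\|\Ngh \cdot V_h\|_{L^\infty(\Ght)}$ and the matrix norm of $B_h$ pointwise, absorbing the fixed data constants $\alpha_{\min}$, $\alpha_{\max}$, and the uniform bounds on $\md \alpha$ and $\Ng \alpha$ from Assumptions~\ref{ass1} and~\ref{ass2} into $c$.

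First I would treat the divergence term. Because $V_h = \tilde{I}_h \w$ is affine on each simplex $E(t) \in \mathcal{T}_h(t)$, its element-wise tangential gradient $\Ngh V_h$ is constant on $E(t)$, and $\Ngh \cdot V_h$ is the trace of the tangential projection of this gradient. A standard Lagrange interpolation estimate on a shape-regular simplex gives $\|\nabla (\tilde{I}_h \w)\|_{L^\infty(E(t))} \le \|\nabla \w\|_{L^\infty} + C h \|\w\|_{\mathcal{C}^2}$, and the projection onto the tangent plane of $\Ght$ has operator norm one. Uniform shape regularity and quasi-uniformity of $\mathcal{T}_h(t)$ supply a constant independent of $t$ and of the element, so $\|\Ngh \cdot V_h\|_{L^\infty(\Ght)} \le c \|\w\|_{\mathcal{C}^2}$. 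The same computation controls $\|D_h(V_h)\|_{L^\infty}$, which enters $B_h$ through $2 \alpha^{-l} D_h(V_h)$.

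Next I would bound $B_h$ term by term. The contributions $\alpha^{-l} (\Ngh \cdot V_h)\,\mathrm{Id}$ and $2\alpha^{-l} D_h(V_h)$ are immediately controlled by $\alpha_{\max}$ times the velocity-gradient bounds of the previous step. The delicate term is $\mdh \alpha^{-l}$. Here I would unfold the definition \eqref{eq:DSMD}, $\mdh \alpha^{-l} = \alpha^{-l}_t + V_h \cdot \nabla \alpha^{-l}$, and apply the chain rule through the projection $p(x,t)$ of \eqref{projection}, writing $\alpha^{-l}(\omega, x, t) = \alpha(\omega, p(x,t), t)$. Differentiating $\alpha(\omega, p(x(t),t), t)$ along a discrete material trajectory $x(t)$ transported by $V_h$ produces the leading term $(\md \alpha)^{-l}$ plus a defect proportional to $\Ng \alpha \cdot \bigl(\tfrac{d}{dt} p - \w(p)\bigr)$, which encodes the discrepancy between transport along $V_h$ on $\Ght$ and along $\w$ on $\Gt$. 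The first contribution is bounded uniformly in $\omega$ by Assumption~\ref{ass1}b), and the defect is controlled by $\|\Ng \alpha\|_{L^\infty}$ (Assumption~\ref{ass2}) times a purely geometric quantity estimated via the smoothness of $d$ and $p$ on $\mathcal{N}_T$ together with the interpolation bound $\|V_h - \w^{-l}\|_{L^\infty} \le C h \|\w\|_{\mathcal{C}^2}$.

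I expect the main obstacle to be exactly this treatment of $\mdh \alpha^{-l}$: one must differentiate the inverse lift along the discrete material flow and quantify the geometric defect $\tfrac{d}{dt} p - \w(p)$, which couples the curvature of $\Gt$, the distance-function estimates on $\mathcal{N}_T$, and the velocity interpolation error. This is the random analogue of the deterministic estimate in \cite[Lemma~3.3]{ER}. Once the chain-rule bookkeeping is organised so that every factor is either a uniform-in-$\omega$ bound on $\alpha$ (from Assumptions~\ref{ass1} and~\ref{ass2}) or a geometric quantity controlled by $\sup_t \|\w(t)\|_{\mathcal{C}^2(\mathcal{N}_T)}$, the stated estimate follows by collecting the three terms and taking the supremum over $t \in [0,T]$.
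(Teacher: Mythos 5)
Your proposal is correct and takes essentially the approach the paper intends: the paper offers no detailed proof of Lemma~\ref{boundBh}, asserting only that the bound follows from the geometric properties of $\Ght$ in analogy to the deterministic result \cite[Lemma~3.3]{ER}, combined implicitly with the uniform-in-$\omega$ bounds on $\alpha$, $\md \alpha$ and $\Ng \alpha$ from Assumptions~\ref{ass1} and~\ref{ass2}. Your reduction to $\omega$-uniform pointwise estimates, the shape-regular interpolation bounds for $\Ngh \cdot V_h$ and $D_h(V_h)$, and the chain-rule identity $\mdh \alpha^{-l} = (\md \alpha)^{-l} + \Ng \alpha \cdot (\w_h - \w)$ together with the geometric velocity estimate of type \eqref{5.18} are precisely the details that citation suppresses.
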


Since the probability space does not depend on time, the discrete analogue of the
corresponding transport formulae hold, where the discrete material velocity 
and discrete tangential gradients are understood in an element-wise sense. 
The resulting discrete result is stated for example in \cite[Lemma 4.2]{DE13}. 
The following lemma follows by integration over $\Omega$.

\begin{lemma}[Transport lemma for triangulated surfaces]   \label{lem:PT}
Let $\{\Ght\}$ be a family of triangulated surfaces evolving with discrete velocity $V_h$. Let $\phi_h, \eta_h$ be time dependent functions such that the following quantities exist. Then
\begin{equation*}
\frac{d}{dt} \int_\Omega \int_{\Gamma_h(t)} \phi_h = \int_\Omega \int_{\Gamma_h(t)} \mdh \phi_h + \phi_h \nabla_{\Gamma_h} \cdot V_h.
\end{equation*}
In particular, 
\begin{equation} \label{eq:TPF1}
\frac{d}{dt}m_h(\phi_h,\eta_h) = m(\mdh \phi_h,\eta_h) + m(\phi_h,\mdh \eta_h) + g_h(V_h; \phi_h, \eta_h). 
\end{equation}
\end{lemma}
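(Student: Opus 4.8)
The plan is to deduce the random transport identities from the deterministic transport formula for evolving triangulated surfaces by integrating over the time-independent probability space. First I would fix $\omega \in \Omega$ and invoke the classical transport lemma \cite[Lemma 4.2]{DE13}. Since $\Ght$ evolves with the discrete velocity $V_h$ and both $\mdh$ and $\Ngh \cdot$ are defined element-wise through \eqref{eq:DSMD}, applying the smooth transport formula on each simplex $E(t) \in \mathcal{T}_h(t)$ and summing over elements yields
\[
\frac{d}{dt}\int_{\Ght} \phi_h(\omega) = \int_{\Ght} \mdh \phi_h(\omega) + \phi_h(\omega)\, \Ngh \cdot V_h
\]
for $\mathbb{P}$-a.e.\ $\omega$; the contributions from the edges shared by adjacent simplices cancel because $V_h$ is continuous across interfaces, being the piecewise linear interpolant of $\w$.

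Next I would integrate this pointwise-in-$\omega$ identity against $\mathbb{P}$ over $\Omega$. Because the probability space $(\Omega, \mathcal{F}, \mathbb{P})$ carries no time dependence, the measure does not move with $t$, and the only nontrivial point is to interchange $\tfrac{d}{dt}$ with $\int_\Omega \,\mathrm{d}\mathbb{P}$. Under the standing hypothesis that all the quantities exist, i.e.\ that $\phi_h$, $\mdh \phi_h$ and the relevant products are integrable over $\Omega$ uniformly in $t$ within the evolving Bochner framework \eqref{discGT}, this exchange is justified by dominated convergence applied to the difference quotients in $t$. This produces exactly the first asserted identity.

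For the ``in particular'' statement I would apply the identity just obtained with $\phi_h$ replaced by the product $\phi_h \eta_h$. The element-wise discrete material derivative obeys the Leibniz rule
\[
\mdh(\phi_h \eta_h) = (\mdh \phi_h)\,\eta_h + \phi_h\,(\mdh \eta_h),
\]
so that, recalling the definitions of $m_h$ and $g_h(V_h;\cdot,\cdot)$, the right-hand side separates precisely into the three terms on the right-hand side of \eqref{eq:TPF1}.

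The main obstacle is thus not geometric but measure-theoretic: justifying differentiation under the integral sign over $\Omega$. Everything geometric is inherited verbatim from the deterministic Lemma~4.2, and once the dominated-convergence hypothesis encoded in ``such that the following quantities exist'' is granted, the interchange of $\tfrac{d}{dt}$ and $\int_\Omega$ is the only step requiring care.
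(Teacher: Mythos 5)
Your proposal is correct and follows essentially the same route as the paper: the paper likewise invokes the deterministic discrete transport lemma \cite[Lemma 4.2]{DE13} pathwise and obtains the random version by integration over the time-independent probability space $\Omega$. Your additional care about interchanging $\tfrac{d}{dt}$ with $\int_\Omega$ and the Leibniz-rule derivation of \eqref{eq:TPF1} simply make explicit what the paper leaves implicit.
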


\section{Evolving surface finite element methods} \label{sec:EVOLF}
Following~\cite{DE07}, we now introduce an evolving 
surface finite element discretization  (ESFEM) of Problem~\ref{prob:WF2}.
\subsection{Finite elements on simplicial surfaces} 
For each $t \in [0,T]$ we define the \textit{evolving  finite element space}
\begin{equation}
S_h(t)  := \{ \eta \in \mathcal{C}(\Gamma_h(t)) \; | \, \eta_{E} \, \text{ is affine }\forall E \in \mathcal{T}_h(t) \} . \label{FEM}\\
\end{equation}
We denote by $\{ \chi_j(t)\}_{ j=1, \dots, J }$ the nodal basis of $S_h(t)$, i.e.  $\chi_j(X_i(t),t) = \delta_{ij}$ (Kronecker-$\delta$).
These basis functions  satisfy the transport property \cite[Lemma 4.1]{DE13}
\begin{equation}
\partial_h^\bullet \chi_j = 0 . \label{tr prop}
\end{equation}

We consider the following Gelfand triple 
\begin{equation} \label{disGel}
S_h(t) \subset L_h(t) \subset S^*_h(t), 
\end{equation}
where all three spaces algebraically coincide  but are equipped with different norms
inherited from the corresponding continuous counterparts, i.e.,
\[
S_h(t) := ( S_h(t), \| \cdot \|_{H^1(\Gamma_h(t))} ) \quad \text{and} \quad L_h(t) := (S_h(t), \| \cdot \|_{L^2(\Gamma_h(t))}).
\]
The dual space $S^*_h(t)$ consists of all continuous linear functionals on $S_h(t)$ and is equipped with the standard dual norm 
\[
\| \psi \|_{S^*_h(t)} := \sup_{\{\eta \in S_h(t) \;| \; \|\eta\|_{H^1(\Gamma_h(t))}=1\}}  | \psi(\eta) |. 
\]
Note that all three norms are equivalent as norms on finite dimensional spaces, which implies that (\ref{disGel}) is the Gelfand triple. 
As a discrete counterpart of~\eqref{eq:DGEL}, we introduce the Gelfand triple
\begin{equation}
L^2(\Omega, S_h(t)) \subset L^2(\Omega, L_h(t)) \subset L^2(\Omega, S^*_h(t)) \label{discreteGt}.
\end{equation}
Setting
\[
V_h(t) :=L^2(\Omega, S_h(t)) \quad
H_h(t) := L^2(\Omega, L_h(t)) \quad
V^*_h(t) := L^2(\Omega, S^*_h(t))
\]
we obtain the finite element analogue 
\begin{equation}
L^2_{V_h(t)} \subset L^2_{H_h(t)}  \subset L^2_{V^*_h(t)}
\end{equation}
of the Gelfand triple \eqref{discGT} of evolving Bochner-Sobolev spaces.
Let us note that since the sample space $\Omega$ is independent of time, it holds
\begin{equation}
L^2(\Omega, L^2_X) \cong L^2(\Omega) \otimes L^2_X \cong L^2_{L^2(\Omega, X)} \label{pathisom}
\end{equation}
for any evolving family of separable Hilbert spaces $X$ (see, e.g., Section {\ref{evolving polyh}). 
We will exploit this isomorphism for $X=S_h$ in the following definition of the solution space for the semi-discrete problem, where we will rather consider the problem in a path-wise sense.

We define the solution space for the semi-discrete problem as the space of functions 
that are smooth for each path in the sense that
$\phi_h(\omega) \in \mathcal{C}^1_{S_h}$ holds for all  $\omega \in \Omega$. 
Hence, $\partial^\bullet_h \phi_h$ is defined path-wise 
for path-wise smooth functions.  
In addition, we require $\partial^\bullet_h \phi_h(t) \in H_h(t)$
to define the semi-discrete solution space
\[
W_h(V_h,H_h):= L^2(\Omega, \mathcal{C}^1_{S_h}).
\]
The scalar product of this space is defined by
\[
(U_h, \phi_h)_{W_h(V_h,H_h)} := \int_0^T  \int_{\Omega} (U_h,\phi_h )_{H^1(\Gamma_h(t))} + \int_0^T  \int_{\Omega} (\mdh U_h, \mdh \phi_h)_{L^2(\Gamma_h(t))}
\]
with the associated norm $\|\cdot \|_{W_h(V_h,H_h)}$. 

The semi-discrete approximation of Problem~\ref{prob:WF2},
on $\{\Gamma_h(t)\}$ now reads as follows.
\begin{problem}[ESFEM discretization in space] \label{prob:ESFEM}
Find $U_h \in W_h(V_h,H_h)$ that point-wise satisfies the initial condition $U_h(0)=U_{h,0} \,\in V_h(0)$ and 
\begin{equation}\label{semidiscrete problem}
\frac{d}{dt}m_h(U_h,\varphi) + a_h(U_h,\varphi) = m_h(U_h,\mdh{\varphi})  \qquad \forall \varphi \in W_h(V_h,H_h).
\end{equation}
\end{problem}

In contrast to $W(V,H)$, the semidiscrete space
$W_h(V_h,H_h)$ is not complete so that the proof of the following 
existence and stability result  requires a different kind of argument.

\begin{theorem}\label{theo:SEDIP}
The semi-discrete problem (\ref{semidiscrete problem 2}) has a unique solution $U_h \in W_h(V_h, H_h)$ 
which satisfies the stability property
\begin{equation} \label{eq:STABP}
\| U_h \|_{W(V_h, H_h)} \leq C \| U_{h,0} \|_{V_h(0)}
\end{equation}
with a mesh-independent constant $C$ 
depending only on  $T$, $\alpha_{\min}$, and the bound for $\|\Ngh \cdot V_h\|_{\infty}$ from Lemma~\ref{boundBh}.
\end{theorem}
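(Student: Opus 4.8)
The plan is to exploit the finite-dimensionality of $S_h(t)$ in space and solve the problem path-wise in $\omega$, thereby circumventing the fact that $W_h(V_h,H_h)$ is not complete (so that no abstract Galerkin/Lions argument on a complete solution space is available). First I would fix $\omega\in\Omega$ and expand the sought solution in the nodal basis, $U_h(\omega,\cdot,t)=\sum_{j=1}^J\beta_j(\omega,t)\,\chi_j(\cdot,t)$. By the transport property \eqref{tr prop} the discrete material derivative collapses to $\mdh U_h(\omega)=\sum_j\dot\beta_j(\omega,t)\,\chi_j$. Testing \eqref{semidiscrete problem} against functions of tensor form $\psi\otimes\chi_i$ with arbitrary $\psi\in L^2(\Omega)$ (for which $\mdh\chi_i=0$ kills the right-hand side) and using the density of such functions in $V_h(t)$, I obtain for $\mathbb P$-a.e.\ $\omega$ the linear system $\frac{d}{dt}\bigl(M(t)\beta(\omega)\bigr)+A(\omega,t)\beta(\omega)=0$, where $M(t)_{ij}=\int_{\Ght}\chi_i\chi_j$ is the deterministic mass matrix and $A(\omega,t)_{ij}=\int_{\Ght}\alpha^{-l}(\omega)\,\Ngh\chi_j\cdot\Ngh\chi_i$.

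Since $\{\chi_j(t)\}$ is a basis, $M(t)$ is symmetric positive definite, hence invertible, and $M$, $\dot M$, $A(\omega,\cdot)$ are continuous on $[0,T]$ by the assumed smoothness of $\{\Ght\}$ and of $\alpha^{-l}$; the system reads $\dot\beta(\omega)=-M^{-1}(\dot M+A(\omega,\cdot))\beta(\omega)$. Classical linear ODE theory then gives, for each $\omega$ and each initial vector $\beta(\omega,0)$ read off from $U_{h,0}(\omega)$, a unique $\mathcal C^1$ solution $\beta(\omega,\cdot)$, whence $U_h(\omega)\in\mathcal C^1_{S_h}$. Measurability of $\omega\mapsto\beta(\omega,\cdot)$ follows from the measurable dependence of $A(\omega,\cdot)$ and of the data on $\omega$ together with continuous dependence of linear ODE solutions on their coefficients; combined with the stability bound below this gives $U_h\in L^2(\Omega,\mathcal C^1_{S_h})=W_h(V_h,H_h)$, and uniqueness in $W_h$ follows from uniqueness of each path-wise ODE solution.

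For the $L^2_{V_h}$-part of \eqref{eq:STABP} I would test \eqref{semidiscrete problem} with $\varphi=U_h$ and eliminate $\frac{d}{dt}m_h(U_h,U_h)$ using the transport formula \eqref{eq:TPF1}, which yields the energy identity $\tfrac12\frac{d}{dt}\|U_h\|_{\mathcal H_h(t)}^2+a_h(U_h,U_h)=-\tfrac12\,g_h(V_h;U_h,U_h)$. Coercivity $a_h(U_h,U_h)\ge\alpha_{\min}\|\Ngh U_h\|_{\mathcal H_h(t)}^2\ge0$ together with the bound on $\|\Ngh\cdot V_h\|_{\infty}$ from Lemma~\ref{boundBh} and Gronwall's inequality controls $\sup_t\|U_h(t)\|_{\mathcal H_h(t)}^2$; integrating the coercivity term in time then bounds $\int_0^T\|U_h\|_{H^1(\Ght)}^2$ by $C\|U_{h,0}\|_{V_h(0)}^2$ with $C$ depending only on $T$, $\alpha_{\min}$ and the divergence bound.

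It remains to estimate $\mdh U_h$ in $L^2_{\mathcal H_h}$. Substituting \eqref{eq:TPF1} into \eqref{semidiscrete problem} gives $m_h(\mdh U_h,\varphi)=-a_h(U_h,\varphi)-g_h(V_h;U_h,\varphi)$, and $\varphi=\mdh U_h$ produces $\|\mdh U_h\|_{\mathcal H_h(t)}^2=-a_h(U_h,\mdh U_h)-g_h(V_h;U_h,\mdh U_h)$. The main obstacle is the term $a_h(U_h,\mdh U_h)$, which cannot be bounded directly without an inverse inequality that would destroy mesh-independence of $C$. I would overcome this by integrating in time and invoking the Leibniz formula for the diffusion form (the random analogue of the deterministic one in \cite{DE13}, with curvature and coefficient variation collected in $B_h$), $\frac{d}{dt}a_h(U_h,U_h)=2a_h(\mdh U_h,U_h)+b_h(V_h;U_h,U_h)$, so that $\int_0^T a_h(U_h,\mdh U_h)=\tfrac12\bigl[a_h(U_h,U_h)\bigr]_0^T-\tfrac12\int_0^T b_h(V_h;U_h,U_h)$. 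The boundary contribution is dominated by $\tfrac12 a_h(U_{h,0},U_{h,0})\le C\|U_{h,0}\|_{V_h(0)}^2$, the $b_h$-term by the bound on $\|B_h\|$ from Lemma~\ref{boundBh} together with the $L^2_{V_h}$-estimate already obtained, and the $g_h$-term by Young's inequality, absorbing $\tfrac14\int_0^T\|\mdh U_h\|_{\mathcal H_h(t)}^2$ into the left-hand side. This gives $\|\mdh U_h\|_{L^2_{\mathcal H_h}}\le C\|U_{h,0}\|_{V_h(0)}$ and completes the proof of \eqref{eq:STABP}.
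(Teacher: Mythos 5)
Your proposal is correct and follows essentially the same route as the paper's proof: path-wise reduction to a linear ODE system $\frac{d}{dt}(M\beta)+S(\omega,\cdot)\beta=0$ via the nodal basis and the transport property \eqref{tr prop}, measurability of $\omega\mapsto U_h(\omega)\in\mathcal C^1_{S_h}$ through measurable dependence of the coefficients and continuous dependence of ODE solutions on them, integration of a uniform path-wise stability bound over $\Omega$, and a density argument with tensor-product test functions to pass between the path-wise and random formulations. The only difference is that where the paper simply cites \cite[Lemma 4.3]{DE13} for the path-wise stability estimate, you derive it explicitly — the energy identity with Gronwall for the $L^2_{V_h}$ part, and the transport identity $\frac{d}{dt}a_h(U_h,U_h)=2a_h(\mdh U_h,U_h)+b_h(V_h;U_h,U_h)$ to treat $a_h(U_h,\mdh U_h)$ without an inverse inequality — which is precisely the content of that cited lemma (and, as in the paper, makes the constant depend also on $\alpha_{\max}$ and the bound on $B_h$, not only on the quantities listed in the theorem statement).
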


\begin{proof}
In analogy to  Subsection ~\ref{subsec:WFWP}, Problem ~\ref{prob:ESFEM} is equivalent to
find $U_h \in W_h(V_h,H_h)$ that point-wise satisfies the 
initial condition $U_h(0)=U_{h,0} \,\in V_h(0)$ and
\begin{equation}\label{semidiscrete problem 2}
m_h(\mdh U_h, \varphi) + a(U_h, \varphi) + g(V_h; U_h, \varphi) =0 
\end{equation}
for every $\varphi \in L^2(\Omega, S_h(t))$ and a.e. $t \in [0,T]$.

Let $\omega \in \Omega$ be arbitrary but fixed. We start with 
considering the deterministic path-wise problem
to find $U_h(\omega)\in {\mathcal C}_{S_h}^1$
such that $U_h(\omega;0)=U_{h,0}(\omega)$ and 
\begin{equation}\label{sdpath}
\int_{\Gamma_h(t)} \mdh U_h(\omega) \varphi  + \int_{\Gamma_h(t)} \alpha^{-l}(\omega) \Ngh U_h(\omega) \cdot \Ngh \varphi  + \int_{\Gamma_h(t)} U_h(\omega) \varphi \Ngh \cdot V_h = 0
\end{equation}
holds for all $\varphi \in S_h(t)$ and a.e.\ $t \in [0,T]$.
Following Dziuk \& Elliott \cite[Section 4.6]{DE13},
we insert the nodal basis representation
\begin{equation}\label{sumrep}
U_h(\omega,t,x) = \sum_{j=1}^J U_j(\omega,t) \chi_j(x,t)
\end{equation}
into  (\ref{sdpath}) 
and take $\varphi = \chi_i(t)\in S_h(t), \, i=1,\dots,J$, as test functions.
Now the transport property (\ref{tr prop}) implies
\begin{eqnarray}\label{prematrix}
\sum_{j=1}^J \frac{\partial}{\partial t} U_j(\omega) \int_{\Gamma_h(t)} \!\!\! \chi_j \chi_i &+& \sum_{j=1}^J U_j(\omega) \int_{\Gamma_h(t)} \!\!\! \alpha^{-l}(\omega) \Ngh \chi_j \cdot \Ngh \chi_i \\
&+&
\sum_{j=1}^J U_j(\omega) \int_{\Gamma_h(t)}  \!\!\! \chi_j \chi_i  \Ngh \cdot V_h = 0. \nonumber
\end{eqnarray}
We introduce the evolving mass matrix $M(t)$ with coefficients
\[
M(t)_{ij} := \int_{\Gamma_h(t)} \chi_i(t) \chi_j(t),
\]
and the evolving stiffness matrix $S(\omega, t)$ with coefficients
\[
S(\omega, t)_{ij} := \int_{\Gamma_h(t)}\alpha^{-l}(\omega,t) \Ngh \chi_j(t)\Ngh \chi_i(t).
\] 
From \cite[Proposition 5.2]{DE13} it follows
\[
\frac{dM}{dt} = M'
\]
where 
\[
M'(t)_{ij} :=  \int_{\Gamma_h(t)}  \!\!\! \chi_j(t) \chi_i(t)  \Ngh \cdot V_h(t).
\]
Therefore, we can write (\ref{prematrix}) as the following linear initial value problem
\begin{equation} \label{eq:LINSYS}
   \frac{\partial}{\partial t} (M(t)U(\omega,t) )
  +  S(\omega,t) U(\omega,t) = 0,
  \qquad
  U(\omega,0) = U_0(\omega),
\end{equation}
for the unknown  vector $U(\omega,t) = (U_j(\omega,t))_{i=1}^J$ of coefficient functions.
As in \cite{DE13}, 
there exists an unique path-wise semi-discrete solution $U_h(\omega) \in \mathcal{C}^1_{S_h}$,
since the matrix $M(t)$ is uniformly positive definite on $[0,T]$ and the stiffness matrix $S(\omega, t)$ is positive semi-definite for every $\omega \in \Omega$. Note that the time regularity of $U_h(\omega)$ follows from $M$, $S(\omega) \in C^1(0,T)$
which in turn is a consequence of our assumptions on the time regularity 
of the  evolution of $\Gamma_h(t)$.

The next step is to prove the measurability of the map $\Omega \ni \omega \mapsto U_h(\omega) \in \mathcal{C}^1_{S_h}$. 
On $\mathcal{C}^1_{S_h}$  we consider the Borel $\sigma-$algebra induced by the norm
\begin{equation}\label{c1norm}
\|U_h\|^2_{\mathcal{C}^1_{S_h}} := \int_0^T \|U_h(t) \|^2_{H^1(\Gamma_h(t)) } + \|\mdh U_h(t) \|^2_{L^2(\Gamma_h(t)) }.
\end{equation}
We write (\ref{prematrix}) in the following form
\[
\frac{\partial}{\partial t} U(\omega,t) + A(\omega, t) U(\omega, t) = 0, \qquad
  U(\omega,0) = U_0(\omega),
\]
where 
\[
A(\omega, t) := M^{-1}(t)\left(  M'(t) + S(\omega,t) \right) .
\]

As $U_{h,0}\in V_h(0)$, the function $\omega \mapsto U_{0}(\omega)$ is measurable
and since $\alpha^{-l}$ is a $\mathcal{F} \otimes \mathcal{B}({\mathcal{G}_T^h})$-measurable function, it follows from Fubini's Theorem \cite[Sec.~36, Thm.~C]{Hal} that
\[
 \Omega \ni \omega \mapsto \left(U_{0}(\omega), A(\omega) \right) \in \mathbb{R}^J \times \left(C^1 \left([0,T], \mathbb{R}^J \right), \|\cdot\|_{\infty}\right)
\]
 is measurable function. Utilizing Gronwall's lemma it can be shown that the mapping  
\[
 \mathbb{R}^J \times \left(C^1 \left([0,T], \mathbb{R}^J \right), \|\cdot\|_{\infty}\right) \ni
(U_{0}, A) \mapsto U \in  \left(C^1 \left([0,T], \mathbb{R}^J \right), \|\cdot\|_{\infty}\right)
\]
is continuous. Furthermore, the mapping
\[
  \left(C^1 \left([0,T], \mathbb{R}^J \right), \|\cdot\|_{\infty}\right) \ni U \mapsto U \in  \left(C^1 \left([0,T], \mathbb{R}^J \right), \|\cdot\|_{2}\right)
\]
with
\[
\|U\|_2^2 := \int_0^T \|U(t)\|^2_{\mathbb{R}^J} +  \|\frac{d}{dt}U(t)\|^2_{\mathbb{R}^J}
\]
is continuous. Exploiting that the triangulation ${\mathcal T}_h(t)$ of $\Gamma_h(t)$
is quasi-uniform, uniformly in time, the continuity of the linear mapping 
\[
 \left(C^1\left([0,T], \mathbb{R}^J\right), \|\cdot\|_{2}\right) \ni U \mapsto U_h \in  \mathcal{C}^1_{S_h}
\]
follows from the  triangle inequality and the Cauchy-Schwarz inequality.
We finally conclude that the function
\[
\Omega \ni \omega \mapsto U_h(\omega) \in \mathcal{C}^1_{S_h}
\]
is measurable as a composition of measurable and continuous mappings. 

The next step is to prove the stability property \eqref{eq:STABP}.
For each fixed $\omega \in \Omega$, path-wise stability results from \cite[Lemma 4.3]{DE13}  imply
\begin{equation}\label{pathest}
\| U_h(\omega) \|^2_{\mathcal{C}^1_{S_h}} \leq C \|U_{h,0}(\omega) \|^2_{H^1(\Gamma_h(0))}
\end{equation}
where $C=C(\alpha_{\min}, \alpha_{\max}, V_h, T, \mathcal{G}_h^T)$ 
is independent of $\omega$ and $U_{h,0}(x) \in L^2(\Omega)$. 
Integrating (\ref{pathest}) over $\Omega$ we get the bound 
\[
\| U_h \|_{W(V_h, H_h)}=\| U_h \|_{L^2(\Omega, \mathcal{C}^1_{S_h})}^2 \leq C \|U_{h,0} \|^2_{V_h(0)}.
\]
In particular, we have $U_h \in W_h(V_h,H_h)$. 

It is left to show that $U_h$ solves (\ref{semidiscrete problem 2}) 
and thus Problem~\ref{prob:ESFEM}.
Exploiting the tensor product structure of the test space 
$L^2(\Omega, S_h(t)) \cong L^2(\Omega) \otimes  S_h(t)$ (see \eqref{pathisom}),
we find that 
\[
\{ \varphi_h(x,t) \eta(\omega)\, | \, \varphi_h(t) \in S_h(t), \eta \in L^2(\Omega)\}
\subset L^2(\Omega) \otimes S_h(t)
\]
is a dense subset of $L^2(\Omega, S_h(t))$. 
Taking any test function $\varphi_h(x,t) \eta(\omega)$ from this dense subset,
we first  insert $\varphi_h(x,t)\in S_h(t)$ into the pathwise problem \eqref{sdpath}, 
then multiply with  $\eta(\omega)$, and finally integrate over  $\Omega$ 
to establish \eqref{semidiscrete problem 2}.
This completes the proof.
\end{proof}

\subsection{Lifted finite elements}
We exploit  (\ref{eq:PROPP}) to define the lift $\eta_h^l(\cdot,t)  \colon \Gt \rightarrow \mathbb{R}$
of functions  $\eta_h(\cdot,t)  \colon \Ght \rightarrow \mathbb{R}$  by
\begin{equation*}
\eta_h^l(p,t) := \eta_h (x(p,t)), \quad p \in \Gt.
\end{equation*}
Conversely, \eqref{projection} is utilized to define
the  inverse lift $\eta^{-l}(\cdot,t) \colon \Ght \rightarrow \mathbb{R}$ of functions $\eta(\cdot,t) \colon \Gt \rightarrow \mathbb{R}$ by
\begin{equation*}
\eta^{-l}(x,t) := \eta(p(x,t),t), \quad x \in \Ght.
\end{equation*}
These operators are inverse to each other, i.e.,  $(\eta^{-l})^l=(\eta^{l})^{-l}=\eta$, 
and, taking characteristic functions $\eta_h$, each element $E(t) \in \mathcal{T}_h(t)$ has its unique associated lifted element $e(t) \in \mathcal{T}_h^l(t)$.
Recall that the inverse lift $\alpha^{-1}$ of the diffusion coefficient $\alpha$ 
was already introduced in (\ref{invlift}).

The  next lemma  states equivalence relations 
between corresponding  norms on $\Gt$ and $\Ght$ 
that follow directly from their deterministic  counterparts (see \cite{Dz88}).

\begin{lemma}\label{norm error}
Let $t \in [0,T]$, $\omega \in \Omega$, 
and let $\eta_h(\omega) \colon  \Gamma_h(t) \rightarrow \mathbb{R}$
with the lift $\eta_h^l(\omega) \colon \Gamma \rightarrow \mathbb{R}$.
Then for each plane simplex $E \subset \Gamma_h(t)$ and its curvilinear lift $e \subset \Gamma(t)$, 
there is a constant $c>0$ independent of $E$, h, t, and $\omega$ such that
\begin{align}
  &\frac{1}{c} \, \| \eta_h \|_{L^2(\Omega, L^2(E))} \leq \| \eta_h^l \|_{L^2(\Omega, L^2(e))} \leq c \, \| \eta_h \|_{L^2(\Omega, L^2(E))} \label{5.4}\\
  &\frac{1}{c} \, \| \Ngh \eta_h \|_{L^2(\Omega, L^2(E))} \leq \| \Ng \eta_h^l \|_{L^2(\Omega, L^2(e))} \leq c \, \| \Ngh \eta_h \|_{L^2(\Omega, L^2(E))} \label{5.5} \\
  &\frac{1}{c} \,  \| \nabla^2_{\Gamma_h}\eta_h \|_{L^2(\Omega, L^2(E))} \leq c \| \nabla^2_{\Gamma} \eta_h^l \|_{L^2(\Omega, L^2(e))} + ch  \| \Ng \eta_h^l \|_{L^2(\Omega, L^2(e))}, \label{5.6}
\end{align}
if the corresponding norms are finite.
\end{lemma}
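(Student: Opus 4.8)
The plan is to reduce everything to the known deterministic estimates from Dziuk~\cite{Dz88} by exploiting the tensor product structure of the Bochner spaces and the fact that the lift acts only in the spatial variable. First I would fix $\omega \in \Omega$ and $t \in [0,T]$ and recall the classical element-wise comparison results: since the lift $p \mapsto x(p,t)$ is a $\mathcal{C}^3$-diffeomorphism between the plane simplex $E$ and its curvilinear image $e$, the change-of-variables formula produces surface measures $\mathrm{d}A_h$ and $\mathrm{d}A$ related through a Jacobian $\delta_h = \delta_h(x,t)$ (the ratio of the lifted to the discrete surface element) satisfying uniform two-sided bounds $\tfrac{1}{c} \le \delta_h \le c$ with $c$ independent of $E$, $h$, $t$, $\omega$, because the surface is $\mathcal{C}^3$ and the triangulation is shape-regular and quasi-uniform uniformly in time. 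Likewise the tangential gradients transform by a matrix $R_h = R_h(x,t)$ comparing $\Ng \eta_h^l$ and $\Ngh \eta_h$ whose singular values are uniformly bounded above and below; the second-order estimate \eqref{5.6} carries the extra lower-order term $ch\,\|\Ng \eta_h^l\|$ coming from the curvature of $\Gamma$ (i.e.\ from the derivative of $R_h$), exactly as in the deterministic case.

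Next I would integrate these pointwise-in-$\omega$ bounds over the sample space. The key observation is that the lift and its inverse do not touch the $\omega$ variable, so for a fixed spatial estimate of the form $\tfrac{1}{c}\|\eta_h(\omega)\|_{L^2(E)} \le \|\eta_h^l(\omega)\|_{L^2(e)} \le c\|\eta_h(\omega)\|_{L^2(E)}$ one simply squares, integrates $\mathrm{d}\Pp(\omega)$ over $\Omega$, and uses the definition
\[
\| \eta_h \|_{L^2(\Omega, L^2(E))}^2 = \int_\Omega \int_E |\eta_h(\omega,x)|^2 \, \mathrm{d}A_h \, \mathrm{d}\Pp(\omega),
\]
together with Fubini's theorem (Tonelli for the nonnegative integrands) to exchange the order of integration. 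Because the constant $c$ from the deterministic estimate is uniform in $\omega$, it factors out of the $\Omega$-integral, yielding \eqref{5.4} and \eqref{5.5} verbatim. The same argument applied to the Hessian comparison gives \eqref{5.6}, with the lower-order remainder surviving the integration in $\omega$ for the same reason.

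The only genuinely delicate point is the uniformity of the deterministic constants in the parameters over which we are \emph{not} integrating, namely $E$, $h$, and $t$: one must verify that Dziuk's geometric estimates hold with a single constant across all simplices of $\mathcal{T}_h(t)$ and all $t \in [0,T]$. This is where the standing hypotheses on $\{\Ght\}$ enter decisively, i.e.\ the uniform-in-time shape regularity and quasi-uniformity together with the $\mathcal{C}^3$-regularity of the evolving surface and of the flow $\Phi_t^0$, which guarantee that the signed distance function $d$ and its derivatives are uniformly controlled on $\mathcal{N}_T$ and hence that the Jacobian and gradient-transformation bounds are uniform. Once this uniformity is in place, the passage to the random setting is a routine integration, so I expect the main obstacle to lie entirely in the careful bookkeeping of the geometric constants rather than in any probabilistic subtlety. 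Since the lemma explicitly cites \cite{Dz88} for the deterministic counterparts, I would simply invoke those estimates and present the $\Omega$-integration as the substantive new step.
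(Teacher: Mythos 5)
Your proposal is correct and matches the paper's approach: the paper itself offers no detailed argument, stating only that the equivalences ``follow directly from their deterministic counterparts (see Dziuk)'' since the lift acts only in the spatial variable and the geometric constants are uniform in $\omega$. Your write-up simply makes explicit the $\omega$-integration and the uniformity bookkeeping that the paper leaves implicit, which is exactly the intended argument.
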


The motion of the vertices of the triangles $E(t) \in \{\mathcal{T}_h(t)\}$ 
induces a discrete velocity $v_h$ of the surface $\{\Gt\}$. 
More precisely, for a given trajectory $X(t)$ of a point on $\{\Gamma_h(t)\}$ 
with velocity $V_h(X(t),t)$
the associated discrete velocity $\w_h$ in $Y(t)=p(X(t),t)$ on $\Gamma(t)$ is defined by
\begin{equation} \label{eq:DVC}
\w_h(Y(t),t) = Y'(t) =  \frac{\partial p}{\partial t}(X(t),t) + V_h(X(t),t) \cdot \nabla p(X(t),t).
\end{equation}
The discrete velocity $\w_h$ gives rise to a discrete material derivative of functions $\varphi \in L^2_{V}$
in an element-wise sense, i.e., we set
\[
\partial_h^\bullet \varphi |_{e(t)} := (\varphi_{t} + \w_h \cdot \nabla \varphi) |_{e(t)}
\]
for all  $e(t) \in  \mathcal{T}_h^l(t)$, where $\varphi_t$ and $\nabla \varphi $ are defined via a smooth extension, analogous to the definition (\ref{material derivative}).

We introduce  a lifted finite element space by
\[
S_h^l(t) :=  \{ \eta^l \in \mathcal{C}(\Gamma(t)) \; | \, \eta \in S_h(t)\}.
\]
Note that there is a unique correspondence 
between each element $\eta \in S_h(t)$ and $\eta^l \in S_h^l(t)$. 
Furthermore, one can show that for every $\phi_h \in S_h(t)$
here holds
\begin{equation} \label{eq:MATLIFT}
\mdh (\phi_h^l) = (\mdh \phi_h)^l.
\end{equation}
Therefore, by (\ref{tr prop}) we get
$$\qquad \partial_h^\bullet \chi_j^l=0.$$
We finally state an analogon to the transport Lemma~\ref{lem:PT} on simplicial surfaces.
\begin{lemma} \label{lem:TPF2}(Transport lemma for smooth triangulated surfaces.)

Let $\Gt$ be an evolving surface decomposed into curved elements $\{\mathcal{T}_h(t)\}$ whose edges move with velocity $\w_h$. Then the following relations hold for functions $\varphi_h, u_h$ such that the following quantities exist
\begin{equation*}
\frac{d}{dt} \int_\Omega \int_{\Gamma(t)} \varphi_h = \int_\Omega \int_{\Gamma(t)} \mdh \varphi_h + \varphi_h \, \nabla_{\Gamma} \cdot \w_h.
\end{equation*}
and
\begin{equation} \label{eq:TPF2}
\frac{d}{dt}m(\varphi,u_h) = 
m(\mdh \varphi_h,u_h) + m(\varphi_h, \mdh u_h) + g(v_h; \varphi_h,u_h).
\end{equation}
\end{lemma}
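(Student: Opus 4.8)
The plan is to reduce the statement to its deterministic, pathwise analogue and then integrate over $\Omega$, in complete parallel to the way Lemma~\ref{lem:PT} was obtained from its deterministic counterpart. Throughout I would exploit that the probability space $(\Omega,\mathcal F,\Pp)$ does not depend on $t$, so that differentiation in time and integration over $\Omega$ may be interchanged.

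First I would fix $\omega \in \Omega$ and work on the curved triangulation $\mathcal{T}_h^l(t)$ of $\Gt$, whose elements $e(t)$ are transported by the discrete velocity $\w_h$ from~\eqref{eq:DVC}. On a single curved element $e(t)$, which evolves materially with $\w_h$, the classical Leibniz (Reynolds) transport formula for a moving hypersurface patch gives
\begin{equation*}
\frac{d}{dt}\int_{e(t)} \varphi_h = \int_{e(t)} \mdh \varphi_h + \varphi_h\, \Ng\cdot \w_h,
\end{equation*}
where $\mdh\varphi_h = \varphi_{h,t} + \w_h\cdot\nabla\varphi_h$ is the element-wise material derivative introduced just above the lemma; note that in this material formulation no separate boundary flux appears, since the boundary of $e(t)$ is carried by $\w_h$ and the surface-divergence term already accounts for the change of area. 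Because the lift is a bijection, the elements $e(t)$ tile $\Gt$ exactly at each time and their shared edges are transported consistently by the continuous velocity $\w_h$; hence summing the per-element identities over $e(t)\in \mathcal{T}_h^l(t)$ yields the global deterministic transport formula
\begin{equation*}
\frac{d}{dt}\int_{\Gt} \varphi_h = \int_{\Gt} \mdh \varphi_h + \varphi_h\,\Ng\cdot \w_h,
\end{equation*}
which is exactly the smooth-triangulation transport identity of Dziuk \& Elliott~\cite{DE13}.

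Next I would integrate this identity over $\Omega$. Since $\Omega$ is time-independent and all quantities in the statement are assumed to exist, differentiation under the integral sign is justified and $\tfrac{d}{dt}$ commutes with $\int_\Omega$, giving the first asserted relation. The second relation~\eqref{eq:TPF2} then follows by applying the first formula to the product $\varphi_h u_h$ and using that $\mdh$ is a first-order derivation, so that the product rule $\mdh(\varphi_h u_h) = (\mdh\varphi_h)\,u_h + \varphi_h\,(\mdh u_h)$ holds element-wise; recognising the three resulting integrals as $m(\mdh\varphi_h,u_h)$, $m(\varphi_h,\mdh u_h)$ and $g(\w_h;\varphi_h,u_h)$ in the notation of~\eqref{eq:CONTBIL} completes the derivation.

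The genuinely substantive point, rather than the algebra, is the interchange of $\tfrac{d}{dt}$ with $\int_\Omega$: this is where the hypothesis that all the displayed quantities exist is needed, since it supplies the domination (or $L^1$-bound on the difference quotients) required for differentiation under the expectation. The deterministic pathwise transport formula is classical once one observes that each curved element moves materially with $\w_h$ and that the exact tiling of $\Gt$ by $\{e(t)\}$ is preserved under the continuous velocity $\w_h$; the product rule used for~\eqref{eq:TPF2} is routine. I would therefore expect the write-up to be short, deferring the surface transport theorem to~\cite{DE13} and spending the only real effort on justifying the interchange of differentiation and integration over $\Omega$.
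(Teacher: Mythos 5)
Your proposal is correct and follows essentially the same route as the paper: the paper gives no separate argument for this lemma, but (as it does explicitly for Lemma~\ref{lem:PT}) it relies on the deterministic transport formula for triangulated/lifted surfaces from Dziuk \& Elliott~\cite{DE13}, and the random version follows by integrating over $\Omega$, using that the probability space is independent of $t$ so that $\tfrac{d}{dt}$ and $\int_\Omega$ commute. Your additional details --- the per-element Leibniz formula, summation over the exact tiling of $\Gt$, and the product rule for $\mdh$ yielding \eqref{eq:TPF2} --- are exactly the content of the cited deterministic result and are consistent with the paper's intent.
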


\begin{xrem} \label{rem:STABLIFT}
Let $U_h$ be the solution of the semi-discrete Problem~\ref{prob:ESFEM} 
with initial condition $U_h(0)=U_{h,0}$
and let $u_h=U_h^l$ with $u_h(0)=u_{h,0}=U_{h,0}^l$ be its lift. Then, as a consequence of Theorem~\ref{theo:SEDIP},
\eqref{eq:MATLIFT}, and Lemma~\ref{norm error}, the following estimate 
\begin{equation}
\|u_h\|_{W(V,H)} \leq C_0 \|u_h(0) \|_{V(0)} \label{apriorilifting}
\end{equation} 
holds with $C_0$  depending on the constants $C$ and $c$ 
appearing in Theorem~\ref{theo:SEDIP} and Lemma~\ref{norm error}, respectively.
\end{xrem}

\section{Error estimates} \label{sec:ERROREST}
\subsection{Interpolation and geometric error estimates}

In this section we formulate the results concerning the approximation of the surface, which are in the deterministic setting proved in \cite{DE07} and \cite{DE13}. Our goal is to prove that they still hold in the random case. The main task is to keep track of constants that appear and show that they are independent of realization. This conclusion mainly follows from the assumption (\ref{uniform}) about the uniform distribution of the diffusion coefficient. Furthermore, we need to show that the extended definitions of the interpolation operator and Ritz projection operator are integrable with respect to $\mathbb{P}$.

We start with an interpolation error estimate for functions 
$\eta \in L^2(\Omega, H^2(\Gamma(t)))$, where the interpolation $I_h \eta$ 
is defined as the lift of piecewise linear nodal interpolation $\widetilde{I}_h\eta \in L^2(\Omega, S_h(t))$.
Note that $\widetilde{I}_h$ is well-defined, 
because the vertices $(X_j(t))_{j=1}^J$ of $\Gamma_h(t)$
lie on the smooth surface $\Gamma(t)$ and $n= 2$, $3$.
\begin{lemma}\label{int estimate}
The interpolation error estimate 
\begin{equation}\label{inter est}
\begin{split}
\| \eta - I_h\eta \|_{H(t)} + h \| \Ng (\eta - I_h\eta) \|_{H(t)} \\
\leq ch^2 \left(\| \nabla_\Gamma^2 \eta \|_{H(t)} + h \| \Ng \eta \|_{H(t)} \right)
\end{split}
\end{equation}
holds for all  $\eta  \in  L^2(\Omega, H^2(\Gamma(t)))$ with a constant $c$ 
depending only on the shape regularity of $\Gamma_h(t)$.
\end{lemma}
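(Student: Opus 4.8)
The plan is to reduce the statement to its deterministic counterpart path-wise in $\omega$ and then integrate over the sample space, the only new issues being the $\omega$-uniformity of the geometric constant and the measurability of $I_h\eta$. Since $\eta \in L^2(\Omega, H^2(\Gt))$, for $\Pp$-almost every $\omega \in \Omega$ we have $\eta(\omega) \in H^2(\Gt)$, and for each such $\omega$ I would invoke the deterministic surface interpolation estimate of Dziuk \& Elliott \cite{DE07,DE13}, namely
\begin{equation*}
\| \eta(\omega) - I_h\eta(\omega) \|_{L^2(\Gt)} + h \| \Ng (\eta(\omega) - I_h\eta(\omega)) \|_{L^2(\Gt)} \leq c h^2 \left( \| \nabla_\Gamma^2 \eta(\omega) \|_{L^2(\Gt)} + h \| \Ng \eta(\omega) \|_{L^2(\Gt)} \right) .
\end{equation*}
The decisive observation is that the constant $c$ there depends only on the shape regularity of $\Gamma_h(t)$; it is a purely geometric quantity and in particular does not depend on $\eta$, hence not on the realization $\omega$.

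Before integrating, I would confirm that $I_h\eta$ is well defined and measurable as a function of $\omega$, so that the Bochner norms make sense. As already noted after the statement, for the surface dimensions considered here ($n = 2, 3$) the embedding $H^2(\Gt) \hookrightarrow \mathcal{C}(\Gt)$ holds, so nodal evaluation is a bounded linear functional and $\widetilde{I}_h \colon H^2(\Gt) \to S_h(t)$ is a bounded linear operator. Consequently $\omega \mapsto \widetilde{I}_h \eta(\omega)$ is strongly measurable as the composition of the strongly measurable map $\omega \mapsto \eta(\omega)$ with a bounded linear operator, and the same holds for its lift $I_h\eta$; in particular each of the four norms appearing above is a measurable, nonnegative function of $\omega$.

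To finish I would pass from the path-wise bound to the norm on $H(t) = L^2(\Omega, L^2(\Gt))$. Abbreviating the four path-wise norms by $A(\omega), B(\omega), C(\omega), D(\omega)$ in the order they appear, the estimate reads $A + hB \leq ch^2(C + hD)$ for $\Pp$-a.e.\ $\omega$. Since all terms are nonnegative, this holds separately for $A$ and for $hB$; taking $L^2(\Omega)$ norms and using monotonicity of the norm together with Minkowski's inequality gives $\|A\|_{L^2(\Omega)}$ and $h\|B\|_{L^2(\Omega)}$ each bounded by $ch^2(\|C\|_{L^2(\Omega)} + h\|D\|_{L^2(\Omega)})$. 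By the very definition of the $H(t)$-norm one has $\|A\|_{L^2(\Omega)} = \| \eta - I_h\eta \|_{H(t)}$, and analogously for the other three quantities, so adding the two inequalities and absorbing the factor $2$ into $c$ produces exactly \eqref{inter est}. There is no deep obstacle here: the whole point of the random extension is the bookkeeping in the first paragraph, i.e.\ checking that the deterministic constant is genuinely geometric and $\omega$-independent, after which the integration is routine.
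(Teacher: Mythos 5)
Your proposal is correct and follows essentially the same route as the paper, whose proof is the one-line remark that the result ``follows directly from the deterministic case and Lemma~\ref{norm error}'': i.e., apply the deterministic Dziuk--Elliott interpolation bound path-wise, note that the constant is purely geometric (hence $\omega$-independent, which is what Lemma~\ref{norm error} guarantees uniformly in $\omega$), and integrate over $\Omega$. Your additional verification of the measurability of $\omega \mapsto I_h\eta(\omega)$ via the embedding $H^2(\Gamma(t)) \hookrightarrow \mathcal{C}(\Gamma(t))$ makes explicit a point the paper only gestures at in the remarks preceding the lemma.
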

\begin{proof}The proof of the lemma follows directly from the deterministic case and Lemma \ref{norm error}.
\end{proof}

We continue with estimating the geometric perturbation errors in the bilinear forms. 

\begin{lemma} \label{bilinear errors}
Let $t\in [0,T]$ be fixed.
For $W_h(\cdot, t)$ and $\phi_h(\cdot, t) \in L^2(\Omega, S_h(t))$ 
with corresponding lifts
$w_h(\cdot, t)$ and $ \varphi_h(\cdot, t) \in L^2(\Omega, S_h^l(t))$
we have the following estimates of the geometric error
\begin{align}
  | m(w_h, \varphi_h) - m_h(W_h, \phi_h) | & \leq c h^2 \|w_h \|_{H(t)} \| \varphi_h \|_{H(t)} \label{5.13}\\
  | a(w_h, \varphi_h) - a_h(W_h, \phi_h) | & \leq c h^2 \|\Ng w_h \|_{H(t)} \| \Ng \varphi_h \|_{H(t)} \label{5.14}\\
  | g(v_h; w_h, \varphi_h) - g_h(V_h; W_h, \phi_h) | & \leq c h^2 \|w_h \|_{V(t)} \| \varphi_h \|_{V(t)} \label{5.15}\\
  |m(\partial_h^\bullet w_h, \varphi_h) - m_h(\partial_h^\bullet W_h, \phi_h)| & \leq
ch^2 \| \partial_h^\bullet w_h \|_{H(t)} \| \varphi \|_{H(t)}. \label{5.22}
\end{align}
\end{lemma}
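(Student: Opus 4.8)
The plan is to reduce each of the four estimates to its deterministic counterpart from Dziuk \& Elliott~\cite{DE07,DE13} by exploiting the tensor product structure of the Bochner spaces and integrating over the probability space $\Omega$. The deterministic geometric perturbation estimates compare integrals over $\Gamma(t)$ and $\Gamma_h(t)$ by transforming both to a common domain via the lift and estimating the resulting determinant and projection factors. The essential geometric fact is that the change-of-variables factor relating the measure on $\Gamma(t)$ to that on $\Gamma_h(t)$, together with the discrepancy between the projections onto the respective tangent spaces, differs from the identity by a term of order $h^2$ (uniformly in $t$, by the uniform shape regularity and smoothness assumptions on $\{\Gamma(t)\}$). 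For each fixed $\omega \in \Omega$, these deterministic estimates apply verbatim to $w_h(\omega,\cdot,t)$ and $\varphi_h(\omega,\cdot,t)$, yielding the pointwise-in-$\omega$ bounds such as $|m(w_h(\omega),\varphi_h(\omega)) - m_h(W_h(\omega),\phi_h(\omega))| \le ch^2 \|w_h(\omega)\|_{L^2(\Gamma(t))} \|\varphi_h(\omega)\|_{L^2(\Gamma(t))}$.

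The key step is then to pass from the pointwise-in-$\omega$ estimates to the stated Bochner-norm estimates. For \eqref{5.13}--\eqref{5.15} I would write the difference of the $\Omega$-integrated bilinear forms as $\int_\Omega$ of the difference of the deterministic forms, apply the pointwise deterministic bound inside the integral, and then use the Cauchy--Schwarz inequality on $L^2(\Omega)$ to split $\int_\Omega \|w_h(\omega)\|\,\|\varphi_h(\omega)\| \le (\int_\Omega \|w_h(\omega)\|^2)^{1/2}(\int_\Omega \|\varphi_h(\omega)\|^2)^{1/2}$, which is exactly the product of the $H(t)$- (respectively $V(t)$-) norms. The crucial point enabling this is that the constant $c$ in the deterministic estimate is a purely geometric quantity depending only on $\{\Gamma(t)\}$ and the shape regularity of $\{\Gamma_h(t)\}$, and is therefore independent of $\omega$; this is precisely the uniformity emphasized in the opening of the section. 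For \eqref{5.14} the diffusion coefficient $\alpha$ enters, and here one additionally uses the uniform bound \eqref{uniform} so that the $\omega$-independence of the constant is preserved.

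The estimate \eqref{5.22} is handled analogously but with $\partial_h^\bullet w_h$ in place of $w_h$. Here I would use the commutation relation \eqref{eq:MATLIFT}, $\mdh(\phi_h^l) = (\mdh \phi_h)^l$, to identify $\partial_h^\bullet w_h$ as the lift of $\partial_h^\bullet W_h$, so that the deterministic mass-matrix perturbation estimate applies to the pair $(\partial_h^\bullet w_h, \varphi_h)$ exactly as in \eqref{5.13}. Integrating over $\Omega$ and applying Cauchy--Schwarz as before gives the claimed bound.

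The main obstacle is bookkeeping rather than conceptual: one must verify that the deterministic constants are genuinely uniform in both $t \in [0,T]$ and $\omega \in \Omega$, which relies on the uniform-in-time shape regularity and quasi-uniformity of $\{\mathcal{T}_h(t)\}$ together with the regularity of the flow $\Phi_t^0$, and—for \eqref{5.14}—on Assumption~\ref{ass1}c). A secondary technical point is ensuring measurability in $\omega$ of the integrands appearing in $\int_\Omega$, so that Fubini's theorem legitimizes interchanging the spatial integration with the integration over $\Omega$; this follows from the measurability of $\alpha$ and of the (path-wise defined) solution established in Theorem~\ref{theo:SEDIP}.
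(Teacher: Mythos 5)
Your proposal is correct and follows essentially the same route as the paper: the paper's proof likewise reduces each estimate to the deterministic geometric perturbation results of Dziuk--Elliott and Lubich--Mansour, using the $\omega$-uniform bounds on $\alpha$ (and $\partial_h^\bullet\alpha$) from Assumption~\ref{ass1} so that the constants are independent of the realization, and then integrates over $\Omega$. Your additional details---the Cauchy--Schwarz step in $L^2(\Omega)$, the use of \eqref{eq:MATLIFT} for \eqref{5.22}, and the measurability remark---are exactly the bookkeeping the paper leaves implicit.
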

\begin{proof}
The assertion follows from  uniform bounds of $\alpha(\omega,t)$ and 
$\partial^\bullet_h \alpha(\omega,t)$ with respect to $\omega \in \Omega$
together with  corresponding deterministic results obtained in \cite{DE13} and \cite{LM}.
\end{proof}

Since the velocity $\w$ of $\Gamma(t)$ is deterministic, 
we can use  \cite[Lemma 5.6]{DE13} to control its deviation 
from the discrete velocity  $\w_h$ on $\Gamma(t)$.
Furthermore, \cite[Corollary 5.7]{DE13} provides the following error estimates
for the continuous and discrete material derivative.

\begin{lemma} \label{l5.6}
For the continuous velocity $\w$ of  $\Gamma(t)$ 
and the discrete velocity $\w_h$ defined in \eqref{eq:DVC}
 the estimate
\begin{equation} \label{5.18}
| \w - \w_h | + h \, | \Ng(\w-\w_h)| \leq ch^2 
\end{equation}
holds pointwise on $\Gamma(t)$. Moreover, there holds
\begin{eqnarray}
 &\| \partial^\bullet z -\partial_h^\bullet z\|_{H(t)} 
 \leq  ch^2 \| z \|_{V(t)}, &\quad z \in V(t), \\ \label{5.20}
& \| \Ng(\partial^\bullet z -\partial_h^\bullet z) \|_{H(t)} 
 \leq  ch \| z \|_{L^2(\Omega, H^2(\Gamma))}, &\quad  z \in L^2(\Omega, H^2(\Gamma(t))), \label{5.21}
\end{eqnarray}
provided that the left hand sides are well-defined.
\end{lemma}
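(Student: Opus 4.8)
The plan is to reduce everything to the deterministic estimates of Dziuk \& Elliott, exploiting the fact that the surface velocity $\w$ and its discrete counterpart $\w_h$ carry no randomness. The pointwise bound \eqref{5.18} is a statement purely about the geometry of $\Gt$ and its interpolant $\Ght$; since $\w$ is deterministic and $\w_h$ is defined through the geometric relation \eqref{eq:DVC}, this estimate is literally \cite[Lemma 5.6]{DE13} and requires no new argument. It is recorded here only because it drives the two Bochner-norm bounds that follow.

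For the material derivative estimates, the first step is to note that the two derivatives differ only through the velocity: from \eqref{material derivative} and the definition of $\mdh$ given below \eqref{eq:DVC}, for a smooth extension of $z$ one has, element-wise on $\mathcal{T}_h^l(t)$,
\[
\md z - \mdh z = \nabla z \cdot (\w - \w_h),
\]
so the whole difference is controlled by the velocity deviation. For a fixed realization $\omega \in \Omega$, $z(\omega)$ lies in $H^1(\Gt)$ (resp.\ $H^2(\Gt)$), and combining the previous display with \eqref{5.18} gives the path-wise bound
\[
\|\md z(\omega) - \mdh z(\omega)\|_{L^2(\Gt)} \leq c h^2 \|z(\omega)\|_{H^1(\Gt)},
\]
while differentiating the product and using both parts of \eqref{5.18} yields
\[
\|\Ng(\md z(\omega) - \mdh z(\omega))\|_{L^2(\Gt)} \leq c\big(h^2 \|\nabla^2_\Gamma z(\omega)\|_{L^2(\Gt)} + h \|\Ng z(\omega)\|_{L^2(\Gt)}\big) \leq c h \|z(\omega)\|_{H^2(\Gt)}.
\]
These are exactly the assertions of \cite[Corollary 5.7]{DE13} applied path-wise. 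The crucial point is that $c$ here depends only on the geometric quantities entering \eqref{5.18} and is therefore independent of $\omega$.

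The final step is to pass from the path-wise bounds to the Bochner norms. Squaring the path-wise estimates, integrating over $\Omega$ with respect to $\mathbb{P}$, and using $\|\cdot\|_{H(t)}^2 = \int_\Omega \|\cdot\|_{L^2(\Gt)}^2$ together with the $\omega$-uniformity of $c$, gives
\[
\|\md z - \mdh z\|_{H(t)}^2 = \int_\Omega \|\md z(\omega) - \mdh z(\omega)\|_{L^2(\Gt)}^2 \leq c^2 h^4 \int_\Omega \|z(\omega)\|_{H^1(\Gt)}^2 = c^2 h^4 \|z\|_{V(t)}^2,
\]
and the analogous computation for the gradient term, with $\|z\|_{L^2(\Omega, H^2(\Gamma))}$ on the right. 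Taking square roots produces the two claimed inequalities.

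Since the genuinely deterministic content is imported wholesale, the only things to verify carefully are that no $\omega$-dependence sneaks into the constants and that the maps $\omega \mapsto \md z(\omega) - \mdh z(\omega)$ are measurable, so that the Bochner integrals above are well-defined; both are immediate once one observes that $\w - \w_h$ is deterministic and that $z$ is the only random object in play. The main (and essentially only) obstacle is thus bookkeeping: confirming the $\omega$-uniformity of the deterministic constants, which holds precisely because the velocity field is deterministic.
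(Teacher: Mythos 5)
Your proposal is correct and follows essentially the same route as the paper: the paper likewise imports the pointwise velocity bound from \cite[Lemma 5.6]{DE13} and the material derivative estimates from \cite[Corollary 5.7]{DE13}, using that $\w$ (hence $\w-\w_h$) is deterministic so the path-wise constants are uniform in $\omega$, and then passes to the Bochner norms $H(t)$, $V(t)$ by integrating over $\Omega$. Your explicit identity $\md z - \mdh z = \nabla z \cdot (\w - \w_h)$ and the measurability remark are just the spelled-out version of what the paper leaves implicit.
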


\begin{xrem}\label{rm: divh bound}
Since  $\w_h$ is a  ${\mathcal C}^2$-velocity field  by assumption, 
 (\ref{5.18}) implies a uniform upper bound for $\nabla_{\Gamma(t)} \cdot \w_h$
 which in turn yields the estimate
 \begin{equation} \label{eq:GBOUND}
  |g(\w_h;w,\varphi)|\leq c \|w\|_{H(t)}\|\varphi\|_{H(t)},\qquad \forall w,\varphi\in H(t)
 \end{equation}
 with a constant $c$ independent of $h$.
\end{xrem}

\subsection{Ritz projection }
For each fixed $t\in [0,T]$ and $\beta \in L^{\infty}(\Gamma(t))$ with 
$0< \beta_{\min} \leq \beta(x) \leq \beta_{\max} < \infty$ a.e. on $\Gamma(t)$ 
the  Ritz projection 
\[
 H^1(\Gamma(t))\ni v \mapsto \Rh^{\beta} v \in S_h^l(t)
\]
is well-defined by the conditions $\int_{\Gt} \Rh^{\beta} v=0$ and
\begin{equation}\label{pathwise Ritz}
  \int_{\Gamma(t)} \beta \Ng \Rh^{\beta} v \cdot \Ng\varphi_h
  = \int_{\Gamma(t)} \beta \Ng v \cdot \Ng\varphi_h \qquad \forall \varphi_h \in S_h^l(t),
\end{equation}
because $\{\eta \in  S_h^l(t)\;|\; \int_{\Gt} \eta = 0 \}\subset  H^1(\Gt)$ 
is finite dimensional and thus closed. 
Note that
\begin{equation} \label{eq:RITZBOUND}
\|\Ng R^\beta v\|_{L^2(\Gt)} \leq {\textstyle \frac{\beta_{\max}}{\beta_{\min}}}\|\Ng v\|_{L^2(\Gt)}.
\end{equation}

For fixed $t \in [0,T]$, the pathwise Ritz projection $u_p: \Omega \mapsto S_h^l(t)$ 
of $u  \in L^2(\Omega, H^1(\Gt))$ is defined by
\begin{equation} \label{eq:PWRITZ}
\Omega \ni \omega \to u_p(\omega)= R^{\alpha(\omega,t)} u(\omega)\in S_h^l(t).
\end{equation}
In the following lemma, we state regularity and 
$a$-orthogonality. 
\begin{lemma}
Let $t \in [0,T]$ be fixed. Then, 
the pathwise Ritz projection $u_p: \Omega \mapsto S_h^l(t)$ of  $u  \in L^2(\Omega, H^1(\Gt))$
satisfies $u_p \in L^2(\Omega, S_h^l(t))$ and the Galerkin orthogonality
\begin{equation} 
a(u-u_p, \eta_h)=0 \qquad \forall \eta_h \in L^2(\Omega, S_h^l(t)).\label{proj res}
\end{equation}
\end{lemma}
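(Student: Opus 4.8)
The plan is to verify the two assertions separately: first that $u_p$ belongs to $L^2(\Omega, S_h^l(t))$, which splits into a pathwise integrability bound and measurability in $\omega$, and then the Galerkin orthogonality \eqref{proj res}, which will follow almost immediately from the defining relation \eqref{pathwise Ritz} once integrability is secured.

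For the integrability, I would fix $\omega$ and apply the a priori bound \eqref{eq:RITZBOUND} with $\beta = \alpha(\omega,t)$. Since Assumption~\ref{ass1}(c) gives $\alpha_{\min}\le \alpha(\omega,t)\le \alpha_{\max}$ uniformly in $\omega$, this yields $\|\Ng u_p(\omega)\|_{L^2(\Gt)}\le (\alpha_{\max}/\alpha_{\min})\,\|\Ng u(\omega)\|_{L^2(\Gt)}$ with an $\omega$-independent constant. Because $u_p(\omega)$ has vanishing mean by construction, a Poincaré--Wirtinger inequality on $H^1(\Gt)$ controls $\|u_p(\omega)\|_{L^2(\Gt)}$ by $\|\Ng u_p(\omega)\|_{L^2(\Gt)}$, so that $\|u_p(\omega)\|_{H^1(\Gt)}\le C\,\|\Ng u(\omega)\|_{L^2(\Gt)}$ pathwise. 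Squaring and integrating over $\Omega$ then bounds $\|u_p\|_{L^2(\Omega,S_h^l(t))}$ by $C\,\|u\|_{L^2(\Omega,H^1(\Gt))}$, which is finite.

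The measurability of $\omega\mapsto u_p(\omega)$ I would handle exactly as in the proof of Theorem~\ref{theo:SEDIP}. Expanding $u_p(\omega)=\sum_j c_j(\omega)\chi_j^l$ in the nodal basis of $S_h^l(t)$, the conditions \eqref{pathwise Ritz} together with the zero-mean constraint form a square linear system $A(\omega)c(\omega)=F(\omega)$ whose entries $A(\omega)_{ij}=\int_{\Gt}\alpha(\omega)\Ng\chi_j^l\cdot\Ng\chi_i^l$ and $F(\omega)_i=\int_{\Gt}\alpha(\omega)\Ng u(\omega)\cdot\Ng\chi_i^l$ are measurable in $\omega$ by the $\mathcal F\otimes\mathcal B(\mathcal{G}_T)$-measurability of $\alpha$, the measurability of $u$, and Fubini's theorem. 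The system is uniquely solvable on the zero-mean subspace by the uniform ellipticity of $\alpha$, and since matrix inversion is continuous on invertible matrices, $c(\omega)=A(\omega)^{-1}F(\omega)$ is measurable; hence so is $u_p$, as a composition of measurable and continuous maps into the finite-dimensional space $S_h^l(t)$. This measurability step is where I expect the only genuine subtlety to lie.

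Finally, for the orthogonality I fix $\omega$ and read \eqref{pathwise Ritz} with $\beta=\alpha(\omega,t)$ as the statement that $\int_{\Gt}\alpha(\omega)\Ng(u-u_p)(\omega)\cdot\Ng\varphi_h=0$ for every $\varphi_h\in S_h^l(t)$. Given any $\eta_h\in L^2(\Omega,S_h^l(t))$, for almost every $\omega$ the realisation $\eta_h(\omega)$ is itself an admissible test function, so the integrand vanishes pathwise; integrating over $\Omega$ and using $\alpha\le\alpha_{\max}$ together with the integrability of $u$, $u_p$, and $\eta_h$ to justify the interchange, I obtain $a(u-u_p,\eta_h)=0$. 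Thus the substantive content is the pathwise a priori estimate and the measurability argument, while the orthogonality is a restatement of the definition integrated in $\omega$.
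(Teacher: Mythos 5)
Your proposal is correct, but your measurability argument takes a genuinely different route from the paper's. The paper proves that $\beta \mapsto \Rh^{\beta}$ is Lipschitz continuous from (a subset of) $L^{\infty}(\Gt)$ into the operator space $\mathcal{L}(H^1(\Gt), S_h^l(t))$ --- via a comparison estimate for $(\Rh^{\beta'}-\Rh^{\beta})v$ obtained by testing both defining equations \eqref{pathwise Ritz} with $\varphi_h=(\Rh^{\beta'}-\Rh^{\beta})v$ --- and then composes with the measurable map $\omega\mapsto\alpha(\omega,t)$, invoking an abstract composition lemma (\cite[Lemma A.5]{HLS16}) to get measurability of $\omega\mapsto u_p(\omega)$. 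You instead exploit the finite dimensionality of $S_h^l(t)$: expand $u_p(\omega)$ in a basis, note that the coefficients solve a uniformly invertible linear system whose entries are measurable in $\omega$ (Fubini plus the joint measurability of $\alpha$ and strong measurability of $u$), and use continuity of matrix inversion. This is more elementary --- it needs neither the operator-norm Lipschitz estimate nor the external lemma --- but it is tied to the discrete setting, whereas the paper's argument would survive with an infinite-dimensional range space and its continuity estimate has independent interest. One imprecision on your side: testing \eqref{pathwise Ritz} with the $J$ nodal basis functions together with the zero-mean constraint gives $J+1$ consistent but linearly dependent equations in $J$ unknowns, not a square system; to literally invert a matrix you should either work in a basis of the $(J-1)$-dimensional zero-mean subspace, where uniform ellipticity makes the stiffness matrix positive definite uniformly in $\omega$, or set up a bordered (Lagrange-multiplier) square saddle-point system --- either repair is routine. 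Your handling of the orthogonality \eqref{proj res} is also different and in fact more direct: you test pathwise with the realisation $\eta_h(\omega)$ and integrate over $\Omega$, whereas the paper tests with products $\varphi_h(x)w(\omega)$ and invokes density of their span in $L^2(\Omega,S_h^l(t))$; both are valid. Your integrability bound matches the paper's up to which inequality is cited (the stability bound \eqref{eq:RITZBOUND} versus ellipticity combined with Poincar\'e's inequality).
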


\begin{proof}
By Assumption~\ref{ass1} the mapping
\[
\Omega \ni \omega \mapsto \alpha(\omega,t)\in {\mathcal B}:=\{\beta\in L^{\infty}(\Gamma(t))\;|\; \alpha_{\min}/2 \leq \beta(x) \leq 2\alpha_{\max} \}\subset L^{\infty}(\Gamma(t))
\]
is measurable.
Hence by, e.g., \cite[Lemma A.5]{HLS16}, it is sufficient to prove that the mapping 
\[
{\mathcal B}\ni \beta \mapsto R^{\beta}\in \mathcal{L}(H^1(\Gt), S_h^l(t))
\]
is continuous with respect to the canonical norm in the space $\mathcal{L}(H^1(\Gt), S_h^l(t))$
of linear operators from $H^1(\Gt)$ to $S_h^l(t)$. 
To this end, let $\beta$, $\beta'\in {\mathcal B}$ and $v\in H^1(\Gt)$ be arbitrary
and we skip the dependence on $t$ from now on.
Then, inserting the test  function $\varphi_h=(\Rh^\beta- \Rh^{\beta'})v \in S_h^l(t)$ into the definition  
\eqref{pathwise Ritz}, utilizing the stability \eqref{eq:RITZBOUND},  we obtain 
\begin{eqnarray*}
&\alpha_{\min}/2& \|(\Rh^{\beta'}- \Rh^{\beta})v\|^2_{H^1(\Gamma)} 
\leq (1 + C_P^2) \int_\Gamma \beta |\nabla_\Gamma(\Rh^{\beta'}- \Rh^{\beta})v|^2 \\
&=&(1 + C_P^2) ( \int_\Gamma ( \beta - \beta') \nabla_\Gamma \Rh^{\beta'} v \nabla_\Gamma (\Rh^{\beta'} - \Rh^{\beta})v \\
& +& \int_\Gamma \beta' \nabla_\Gamma \Rh^{\beta'} v  \nabla_\Gamma (\Rh^{\beta'} - \Rh^{\beta})v
-  \int_\Gamma \beta \nabla_\Gamma v \nabla_\Gamma (\Rh^{\beta'} - \Rh^{\beta})v ) \\
 &=&(1 + C_P^2) \left( \int_\Gamma (\beta' - \beta)(\nabla_\Gamma  v   - \nabla_\Gamma \Rh^{\beta'} v )
 \nabla_\Gamma ( \Rh^{\beta'} - \Rh^{\beta})v \right)\\
 &\leq& (1 + C_P^2)  \| \beta' - \beta\|_{L^{\infty}(\Gamma)} \|\nabla_\Gamma(v - \Rh^{\beta'} v)\|_{L^2(\Gamma)}  \|\nabla_\Gamma(\Rh^{\beta'} - \Rh^{\beta})v\|_{L^2(\Gamma)} \\
 &\leq& \left(1+4 \frac{\alpha_{\max}}{\alpha_{\min}}\right)(1 + C_P^2) 
 \| \beta' - \beta\|_{L^{\infty}(\Gamma)}  \|v\|_{H^1(\Gamma)} \|(\Rh^{\beta'} - \Rh^{\beta})v\|_{H^1(\Gamma)},
\end{eqnarray*}
where $C_P$ denotes the Poincar\'{e} constant in $\{\eta \in  H^1(\Gamma)\;|\; \int_{\Gamma} \eta = 0 \}$
(see, e.g., \cite[Theorem 2.12]{DE13a}). 

The norm of $u_p$ in  $L^2(\Omega, H^1(\Gt))$ is bounded, because
Poincar\'{e}'s inequality and \eqref{uniform} lead to
\begin{align*}
\alpha_{\min} \int_\Omega \| u_p(\omega) \|^2_{H^1(\Gt)} 
&\leq (1+ C_P^2) \int_\Omega \alpha(\omega,t) \| \nabla_\Gamma \Rh^{\alpha(\omega,t)}(u(\omega)) \|^2_{L^2(\Gt)} \\ 
\leq (1+ C_P^2) \alpha_{\max} \int_\Omega \| \nabla_\Gamma u(\omega) \|^2_{L^2(\Gt)} 
&\leq (1+ C_P^2) \| \nabla_\Gamma u \|^2_{L^2(\Omega,  H^1(\Gt))}.
\end{align*}
This implies $u_p \in L^2(\Omega, S_h^l(t))$.

It is left to show  \eqref{proj res}.
For that purpose we select an arbitrary test function $\varphi_h(x)$ in \eqref{pathwise Ritz},
multiply with arbitrary $w \in L^2(\Omega)$, utilise $w(\omega)\Ng \varphi_h(x) = \Ng (w(\omega) \varphi_h(x))$,
and integrate over $\Omega$ to obtain
\[
\int_\Omega \int_{\Gt} \alpha(\omega,x) \Ng (u(\omega,x) - u_p(\omega,x))\Ng (\varphi_h(x) w(\omega))= 0.
\]
Since $\{ v(x) w(\omega)\;|\; v \in S_h^l(t), w \in L^2(\Omega) \}$ 
is a dense subset of $V_h(t)$, 
the Galerkin orthogonality (\ref{proj res}) follows.
\end{proof}

An error estimate for the pathwise Ritz projection $u_p$ defined in \eqref{eq:PWRITZ} 
is established in the next theorem.

\begin{theorem}\label{error in projection}
For each fixed $t\in [0,T]$, the pathwise Ritz projection 
$u_p\in L^2(\Omega, S_h^l(t))$ of $u \in L^2(\Omega, H^2(\Gt))$
satisfies the error estimate
\begin{equation}
\| u - u_p \|_{H(t)} + h \| \Ng (u-u_p) \|_{H(t)} 
\leq ch^2 \| u \|_{L^2(\Omega, H^2(\Gt))} \label{6.2}
\end{equation}
with a constant $c$ depending only on 
the properties of $\alpha$ as stated in Assumptions~\ref{ass1} and~\ref{ass1} and
the shape regularity of $\Gamma_h(t)$.
\end{theorem}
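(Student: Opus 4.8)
The plan is to reduce the estimate to its deterministic counterpart by arguing path-wise and then integrating over $\Omega$, keeping every constant under the control of $\alpha_{\min}$, $\alpha_{\max}$, the uniform $\mathcal{C}^1$-bound on $\alpha$, and the shape regularity of $\Gamma_h(t)$, hence independent of $\omega$. For $\mathbb{P}$-a.e.\ fixed $\omega$, the function $u_p(\omega)=\Rh^{\alpha(\omega,t)}u(\omega)$ is the Galerkin projection of $u(\omega)$ into $S_h^l(t)$ for the $\alpha(\omega,t)$-weighted Dirichlet form on $\Gt$, so the path-wise specialisations of \eqref{pathwise Ritz} and \eqref{eq:RITZBOUND} hold with the $\omega$-independent ellipticity bounds $\alpha_{\min}\le\alpha(\omega,t)\le\alpha_{\max}$ supplied by Assumption~\ref{ass1}.

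First I would establish the gradient estimate by a path-wise C\'ea argument. The path-wise Galerkin orthogonality $\int_{\Gt}\alpha(\omega,t)\,\Ng(u(\omega)-u_p(\omega))\cdot\Ng\varphi_h=0$ contained in \eqref{pathwise Ritz}, combined with the uniform ellipticity, yields for every $\varphi_h\in S_h^l(t)$ the quasi-optimality
\[
\|\Ng(u(\omega)-u_p(\omega))\|_{L^2(\Gt)}\le\tfrac{\alpha_{\max}}{\alpha_{\min}}\,\|\Ng(u(\omega)-\varphi_h)\|_{L^2(\Gt)}.
\]
Choosing $\varphi_h=I_hu(\omega)$, squaring, and integrating over $\Omega$ transfers the bound to the $H(t)$-norm, whereupon Lemma~\ref{int estimate} estimates the interpolation error and produces $\|\Ng(u-u_p)\|_{H(t)}\le ch\,\|u\|_{L^2(\Omega,H^2(\Gt))}$, which is the second term on the left of \eqref{6.2}.

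The $L^2$ bound would follow from an Aubin--Nitsche duality argument, again carried out path-wise. For fixed $\omega$ I would introduce the auxiliary problem of finding $z(\omega)\in H^1(\Gt)$, normalised to zero mean, with $\int_{\Gt}\alpha(\omega,t)\,\Ng z(\omega)\cdot\Ng\psi=\int_{\Gt}(u(\omega)-u_p(\omega))\psi$ for all admissible test functions $\psi$. Testing with $\psi=u(\omega)-u_p(\omega)$ represents $\|u(\omega)-u_p(\omega)\|_{L^2(\Gt)}^2$, and the path-wise Galerkin orthogonality then allows me to subtract the interpolant $I_hz(\omega)$ at no cost. Bounding the result by the gradient estimate of the previous paragraph, the interpolation error of $z(\omega)$, and the elliptic regularity estimate $\|z(\omega)\|_{H^2(\Gt)}\le c\,\|u(\omega)-u_p(\omega)\|_{L^2(\Gt)}$ produces the extra factor $h^2$; dividing through and integrating over $\Omega$ gives $\|u-u_p\|_{H(t)}\le ch^2\,\|u\|_{L^2(\Omega,H^2(\Gt))}$.

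The main obstacle is to ensure that the $H^2$-regularity constant for the dual problem is uniform in $\omega$. This is precisely where the $\mathcal{C}^1$-regularity of $\alpha(\omega,\cdot,t)$ from Assumption~\ref{ass1} together with the uniform gradient bound of Assumption~\ref{ass2} enters: they guarantee that $\|\alpha(\omega,t)\|_{\mathcal{C}^1(\Gt)}$ is bounded independently of $\omega$, so the surface elliptic regularity estimate holds with an $\omega$-independent constant. A secondary technical point is the zero-mean normalisation $\int_{\Gt}\Rh^\beta v=0$ built into \eqref{pathwise Ritz}, which must be respected when formulating the dual problem so that the duality pairing is admissible. Once all path-wise estimates hold with $\omega$-independent constants, the measurability of $\omega\mapsto u_p(\omega)$ already available from the construction makes the concluding integration over $\Omega$ routine.
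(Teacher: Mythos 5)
Your proposal is correct and follows essentially the same route as the paper's proof: a C\'ea-type argument with $\omega$-uniform ellipticity constants for the gradient bound, followed by a path-wise Aubin--Nitsche duality argument in which the $H^2$-regularity constant of the dual problem is uniform in $\omega$ thanks to the uniform $\mathcal{C}^1$-bound on $\alpha$ provided by Assumptions~\ref{ass1} and~\ref{ass2}. The only organizational difference is that you divide and conclude path-wise before integrating over $\Omega$, which quietly sidesteps the measurability of the path-wise dual solution --- a point the paper must address explicitly (by citing results on random elliptic PDEs) because it integrates its dual regularity estimate over $\Omega$ before closing the duality argument.
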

\begin{proof} 
The Galerkin orthogonality (\ref{proj res}) and \eqref{uniform} provide
\begin{align*}
\alpha_{\min }\|\Ng(u-u_p)\|_{H(t)}
&\leq \alpha_{\max} \inf_{v\in L^2( \Omega, S_h^l(t))}\|\Ng(u-v)\|_{H(t)}\\
&\leq \alpha_{\max} \|\Ng(u-I_hv)\|_{H(t)}.
\end{align*}
Hence, the bound for the gradient follows directly from Lemma~\ref{int estimate}.

In order to get the second order bound, we will use a Aubin-Nitsche duality argument. 
For every fixed $\omega \in \Omega$, we consider the path-wise problem to 
find $w(\omega) \in H^1(\Gt)$ with $\int_{\Gt} w = 0$ such that
\begin{equation}\label{eq:ellipticweakform}
  \int_{\Gamma(t)} \alpha  \Ng w(\omega) \cdot \Ng \varphi 
  =  \int_{\Gamma(t)} (u - u_p)\varphi \qquad \forall \varphi \in H^1(\Gamma(t)).
\end{equation}
Since $\Gamma(t)$ is $\mathcal{C}^2$, it follows by \cite[Theorem 3.3]{DE13a} that $w(\omega) \in H^2(\Gt)$. Inserting the test function $\varphi = w(\omega)$ into (\ref{eq:ellipticweakform}) and  utilizing the Poincar\'{e}'s inequality, we obtain
\[
\| \nabla_\Gamma w(\omega) \|_{L^2(\Gamma(t))} \leq \frac{C_P}{\alpha_{\min}} \|u-u_p\|_{L^2(\Gamma(t))}.
\]
Previous estimate together with the product rule for the divergence imply
\[
\| \Delta_\Gamma w(\omega) \|_{L^2(\Gamma(t))} \leq \frac{1}{\alpha_{\min}} \|u-u_p\|_{L^2(\Gamma(t))} + \frac{C_P}{
\alpha_{\min}^2} \|\alpha(\omega)\|_{\mathcal{C}^1(\Gamma(t))} \| u - u_p \|_{L^2(\Gt)}.
\]
Hence, we have the following estimate
\begin{equation}\label{H2reg}
\|w(\omega)\|_{H^2(\Gt)} \leq C \| u - u_p \|_{L^2(\Gt)},
\end{equation}
with a constant $C$ depending only on 
the properties of $\alpha$ as stated in Assumptions~\ref{ass1} and~\ref{ass2}.
Furthermore, well-known results on random elliptic pdes
with uniformly bounded coefficients~\cite{Celliptic,CST} imply measurablility of $w(\omega)$, $\omega \in \Omega$.
Integrating \eqref{H2reg} over $\Omega$, we therefore obtain 
\begin{equation} \label{eq:WEST}
\| w \|_{L^2(\Omega, H^2(\Gt))} \leq C \| u - u_p \|_{H(t)}.
\end{equation}
Using again Lemma~\ref{int estimate}, Galerkin orthogonality (\ref{proj res}), and \eqref{eq:WEST}, we get
\begin{align*}
\| u - u_p \|^2_{H(t)} &= a(w, u - u_p) \
=a(w-I_hw,u-u_p) \\
&\leq \alpha_{\max} \|\Ng(w-I_hw) \|_{H(t)} \| \Ng(u-u_p)\|_{H(t)} \\
&\leq c'h^2 \| w \|_{L^2(\Omega, H^2(\Gt))}  \| u \|_{L^2(\Omega, H^2(\Gt))}\\
&\leq c'c h^2 \| u-u_p\|_{H(t)} \| u \|_{L^2(\Omega, H^2(\Gt))}.
\end{align*}
with a constant $c'$ depending on the shape regularity of $\Gamma_h(t)$.
This completes the proof.
\end{proof}
\begin{xrem}
The first order error bound for $\| \Ng (u-u_p) \|_{H(t)}$ still holds,
if spatial regularity of $\alpha$ as stated in Assumption~\ref{ass2} is not satisfied.
\end{xrem}
We conclude with an error estimate for the material derivative of $u_p$
that can be proved as in the deterministic setting \cite[Theorem 6.2 ]{DE13}.

\begin{theorem} \label{thm:MATDERRI}
For each fixed $t\in [0,T]$, the discrete material derivative of the pathwise Ritz projection satisfies the error estimate
\begin{equation}\label{error md Rproj}
\begin{split}
\| \mdh u - \mdh u_p \|_{H(t)}+ h \| \Ng(\mdh u - \mdh u_p) \|_{H(t)} \\
\leq ch^2(\|u\|_{L^2(\Omega,H^2(\Gamma))} + \|\md u\|_{L^2(\Omega,H^2(\Gamma))})
\end{split}
\end{equation}
with a constant $C$ depending only on 
the properties of $\alpha$ as stated in Assumptions~\ref{ass1} and~\ref{ass2}.
\end{theorem}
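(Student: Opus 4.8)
The plan is to transfer the deterministic argument of \cite[Theorem~6.2]{DE13} to the random setting pathwise in $\omega$, tracking that every constant depends only on $\alpha_{\min}$, $\alpha_{\max}$, the uniform bounds on $\md\alpha$ and $\Ng\alpha$ (Assumptions~\ref{ass1} and~\ref{ass2}), the velocity $\w$ and the shape regularity, so that the pathwise bounds may finally be integrated over $\Omega$ to produce the $\LHH$-norms on the right of \eqref{error md Rproj}. Writing $e:=u-u_p$, Theorem~\ref{error in projection} already supplies $\|e\|_{H(t)}+h\|\Ng e\|_{H(t)}\le ch^2\|u\|_{\LHH}$. Since the lifted finite element space evolves with the discrete velocity $\w_h$, the relevant material derivative is $\mdh$, and the starting point is a differentiated form of the Galerkin orthogonality \eqref{proj res}.

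First I would differentiate the identity $a(e,\chi_j^l)=0$ (valid for all $t$ by \eqref{proj res}) using the transport formula for $a$ along $\w_h$. This formula follows from Lemma~\ref{lem:TPF2} and the commutator relation between $\mdh$ and $\Ng$, and introduces the bilinear form $b(\w_h;\cdot,\cdot)$ whose tensor $B(\omega,\w_h)$ is uniformly bounded in $\omega$. Because the lifted nodal basis satisfies $\mdh\chi_j^l=0$, differentiation gives $a(\mdh e,\chi_j^l)=-b(\w_h;e,\chi_j^l)$, and by linearity
\begin{equation*}
a(\mdh e,\varphi_h)=-\,b(\w_h;e,\varphi_h)\qquad\forall\,\varphi_h\in S_h^l(t),
\end{equation*}
with $|b(\w_h;e,\varphi_h)|\le c\|\Ng e\|_{H(t)}\|\Ng\varphi_h\|_{H(t)}$ and $c$ independent of $\omega$.

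For the gradient bound I would split $\mdh e=(\mdh u-\md u)+(\md u-R^{\alpha}(\md u))+\psi_h$ with $\psi_h:=R^{\alpha}(\md u)-\mdh u_p\in S_h^l(t)$. Lemma~\ref{l5.6} controls the first term (giving $O(h)$ in the gradient and $O(h^2)$ in $H(t)$, against $\|u\|_{\LHH}$), and Theorem~\ref{error in projection} applied to $\md u$ controls the second. For $\psi_h$ I would test the identity above with $\varphi_h=\psi_h$; using the Galerkin orthogonality $a(\md u-R^{\alpha}(\md u),\psi_h)=0$ and the $a$-coercivity $a(\psi_h,\psi_h)\ge\alpha_{\min}\|\Ng\psi_h\|_{H(t)}^2$, the bound of $\|\Ng\psi_h\|_{H(t)}$ reduces to the two right-hand contributions $a(\md u-\mdh u,\psi_h)$ and $b(\w_h;e,\psi_h)$, each $O(h)$ by Lemma~\ref{l5.6} and Theorem~\ref{error in projection}. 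Collecting terms gives $\|\Ng\mdh e\|_{H(t)}\le ch\big(\|u\|_{\LHH}+\|\md u\|_{\LHH}\big)$, the gradient part of \eqref{error md Rproj}.

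The hardest step is the $O(h^2)$ bound on $\|\mdh e\|_{H(t)}$. Since the first two terms of the splitting are already $O(h^2)$ in $H(t)$, it suffices to bound the finite element remainder $\|\psi_h\|_{H(t)}$, for which I would run an Aubin--Nitsche duality argument exactly as in the proof of Theorem~\ref{error in projection}: solve pathwise the adjoint elliptic problem with right-hand side $\psi_h$, obtain an $H^2$-bound on its solution $w(\omega)$ via \eqref{H2reg}, and integrate over $\Omega$. Expressing $\|\psi_h\|_{H(t)}^2$ through the adjoint solution, inserting $I_hw$ by Lemma~\ref{int estimate}, and invoking the differentiated orthogonality, one is left with the residual terms $a(\md u-\mdh u,w)$ and $b(\w_h;e,w)$. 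The main obstacle is that the direct gradient estimates make these only $O(h)$; to reach $O(h^2)$ I would integrate by parts on the closed surface $\Gt$, trading the gradient on $w$ for a divergence, and then use the sharper $L^2$-bounds $\|\md u-\mdh u\|_{H(t)}=O(h^2)$ and $\|e\|_{H(t)}=O(h^2)$ together with the $\mathcal C^1$-regularity of $\alpha$ (Assumption~\ref{ass2}) and of the tensor $B(\omega,\w_h)$ to keep $\Ng\cdot(\alpha\Ng w)$ and $\Ng\cdot(B(\omega,\w_h)^\top\Ng w)$ bounded by $\|w\|_{H^2(\Gt)}$. A secondary point, handled as in Theorem~\ref{theo:SEDIP}, is measurability of $\omega\mapsto\mdh u_p(\omega)$; once this and the uniformity of all constants are in place, integration over $\Omega$ yields \eqref{error md Rproj}.
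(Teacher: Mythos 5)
Your proposal is correct and follows essentially the same route as the paper, whose proof of this theorem is simply the observation that the deterministic argument of Dziuk--Elliott (DE13, Theorem 6.2) transfers pathwise once all constants are checked to be uniform in $\omega$ (via Assumptions~\ref{ass1} and~\ref{ass2}) and measurability permits integration over $\Omega$. Your reconstruction — differentiating the Galerkin orthogonality with the transport formula and $\mdh \chi_j^l = 0$ to get $a(\mdh(u-u_p),\varphi_h) = -b(\w_h; u-u_p,\varphi_h)$, coercivity for the $O(h)$ gradient bound, and Aubin--Nitsche duality with integration by parts on the $b$-term (using the $L^2$-bounds on $u-u_p$ and $\md u - \mdh u$) for the $O(h^2)$ bound — is exactly that argument, with only the inessential variation of comparing against $R^{\alpha}(\md u)$ rather than an interpolant of $\mdh u$.
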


\subsection{Error estimates for the evolving surface finite element discretization}
Now we are in the position to state an error estimate
for the evolving surface finite element discretization of Problem~\ref{prob:WF2} 
as formulated in  Problem~\ref{prob:ESFEM}.

\begin{theorem}\label{L2error}
Assume that the solution $u$ of Problem~\ref{prob:WF2} has the regularity properties
\begin{equation}
\sup_{t \in (0,T)}\| u(t)\|_{L^2(\Omega, H^2(\Gt))} +
\int_0^T 
\| \md u (t) \|^2_{L^2(\Omega, H^2(\Gt))} dt < \infty \label{4.26}
\end{equation} 
and let $U_h \in W_h(V_h,H_h)$ be the solution of the approximating Problem~\ref{prob:ESFEM}
with an initial condition $U_h(0)=U_{h,0}\in V_h(0)$ such that 
\begin{equation}
\| u(0) - U_{h,0}^l \|_{H(0)} \leq c h^2 \label{4.27}
\end{equation}
holds with a constant $c>0$ independent of $h$. Then the lift $u_h:=U_h^l$
satisfies the error estimate
\begin{equation}
\sup_{t \in (0,T)} \| u(t) - u_h(t)\|_{H(t)} \leq Ch^2 \label{l2 error estimate}
\end{equation}
with a constant $C$ independent of $h$.
\end{theorem}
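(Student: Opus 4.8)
The plan is to use the standard Ritz-projection splitting of the error together with an energy estimate on the discrete part, carried out directly in the Bochner norm $\|\cdot\|_{H(t)}$. First I would write $u - u_h = \rho + \theta$ with $\rho := u - u_p$ and $\theta := u_p - u_h$, where $u_p$ is the pathwise Ritz projection from \eqref{eq:PWRITZ}. The term $\rho$ is already under control: Theorem~\ref{error in projection} gives $\|\rho\|_{H(t)} \le ch^2\|u\|_{L^2(\Omega,H^2(\Gt))}$ and Theorem~\ref{thm:MATDERRI} gives $\|\mdh\rho\|_{H(t)} \le ch^2(\|u\|_{L^2(\Omega,H^2(\Gamma))} + \|\md u\|_{L^2(\Omega,H^2(\Gamma))})$, both with constants uniform in $\omega$. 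Since $\theta \in L^2(\Omega, S_h^l(t))$ lies in the lifted finite element space, it is an admissible test function, and by Remark~\ref{rem:STABLIFT} its discrete material derivative is available. It then remains to bound $\sup_t\|\theta(t)\|_{H(t)}$ by $Ch^2$.

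The next step is to set up the error equation. Writing the continuous equation \eqref{3.8} for $u$ in the form $m(\md u,\varphi) + a(u,\varphi) + g(\w;u,\varphi)=0$ and the lifted version of the discrete equation \eqref{semidiscrete problem} for $u_h$, I would subtract them against a test function $\varphi \in L^2(\Omega, S_h^l(t))$. Converting the continuous velocity and material derivative to their discrete counterparts produces velocity and material-derivative discrepancies, while lifting the discrete bilinear forms from $\Ght$ to $\Gt$ produces geometric defects of the forms $m-m_h$, $a-a_h$, $g-g_h$; these are exactly the quantities estimated in Lemmas~\ref{bilinear errors} and~\ref{l5.6}. Using the Galerkin orthogonality \eqref{proj res} to eliminate $a(\rho,\varphi)$, the error equation for $\theta$ reads $m(\mdh\theta,\varphi) + a(\theta,\varphi) + g(\w_h;\theta,\varphi) = F(\varphi) - G(\varphi) - m(\mdh\rho,\varphi) - g(\w_h;\rho,\varphi)$, where $F$ collects the velocity and material-derivative errors carried by $u$ and $G$ the geometric form defects.

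I would then test with $\varphi = \theta$ and apply the transport identity \eqref{eq:TPF2} to rewrite $m(\mdh\theta,\theta) = \tfrac12\tfrac{d}{dt}\|\theta\|_{H(t)}^2 - \tfrac12 g(\w_h;\theta,\theta)$. Together with the coercivity $a(\theta,\theta) \ge \alpha_{\min}\|\Ng\theta\|_{H(t)}^2$ and the bound \eqref{eq:GBOUND} on the $g$-form, this yields $\tfrac{d}{dt}\|\theta\|_{H(t)}^2 + \alpha_{\min}\|\Ng\theta\|_{H(t)}^2 \le c\|\theta\|_{H(t)}^2 + |\mathrm{RHS}(\theta)|$. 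Every term in $\mathrm{RHS}(\theta)$ is $O(h^2)$ times a factor involving $\|u\|_{L^2(\Omega,H^2)}$, $\|\md u\|_{L^2(\Omega,H^2)}$, or a norm of $u_h$: the geometric defects $G(\theta)$ are handled by Lemma~\ref{bilinear errors} with the norms of $u_h$ bounded through the stability estimate \eqref{apriorilifting}, and the $\rho$-terms by the two Ritz estimates above. After absorbing the resulting $\|\Ng\theta\|_{H(t)}$ factors into the coercive term by Young's inequality, integrating in time and invoking Gronwall's lemma leaves $\|\theta(t)\|_{H(t)}^2 \le e^{cT}\big(\|\theta(0)\|_{H(0)}^2 + Ch^4\big)$, where the $h^4$ comes from $\int_0^T(\|u\|_{L^2(\Omega,H^2)}^2 + \|\md u\|_{L^2(\Omega,H^2)}^2)\,dt$, finite by the regularity assumption \eqref{4.26}. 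The initial term is $O(h^4)$ by Theorem~\ref{error in projection} at $t=0$ and the data assumption \eqref{4.27}, so $\sup_t\|\theta\|_{H(t)} \le Ch^2$; combined with the $\rho$-bound and the triangle inequality this gives \eqref{l2 error estimate}.

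The main obstacle I expect is the treatment of $F(\theta)$, the velocity and material-derivative discrepancy. Taken naively, the velocity defect enters through $g(\w_h - \w; u, \theta) = \int_\Omega\int_{\Gt} u\,\theta\,\Ng\cdot(\w_h-\w)$, and since Lemma~\ref{l5.6} only controls $|\Ng(\w - \w_h)|$ at order $h$, this bound would be suboptimal. The remedy, and the delicate point of the proof, is to use the continuous and discrete transport formulae \eqref{eq:TRANSP} and \eqref{eq:TPF2} to recombine this term with the material-derivative defect, so that $F(\theta)$ collapses to $m(u, (\md - \mdh)\theta)$, which Lemma~\ref{l5.6} does control at the optimal order $O(h^2)$. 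A secondary point, running throughout, is to verify that every geometric, interpolation, and stability constant is uniform in $\omega \in \Omega$ — which holds by Assumptions~\ref{ass1} and~\ref{ass2} — so that the pathwise estimates integrate cleanly into the Bochner-space bound.
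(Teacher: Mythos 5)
Your proposal is correct and follows essentially the same route as the paper's proof: the Ritz-projection splitting $u-u_h=\rho+\theta$, the error equation combining geometric defects (Lemma~\ref{bilinear errors}) with velocity/material-derivative defects (Lemma~\ref{l5.6}) and the Ritz estimates (Theorems~\ref{error in projection} and~\ref{thm:MATDERRI}), testing with $\theta$, the transport lemma, coercivity, Young, and Gronwall, with stability \eqref{apriorilifting} and assumption \eqref{4.27} closing the argument. In particular, the ``delicate point'' you flag — recombining the $O(h)$ velocity defect $g(\w-\w_h;u,\theta)$ with the material-derivative defect into $m(u,(\md-\mdh)\theta)$ — is exactly what the paper achieves by starting from the conservative formulation \eqref{3.8}, so its $F_2$ contains this term from the outset.
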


\begin{proof}
Utilizing the preparatory results from the preceding sections, 
the proof can be carried out in analogy to the deterministic version stated in~\cite[Theorem 4.4]{DE13}.

The first step is to decompose the error for fixed $t$ into the pathwise Ritz projection error and 
the deviation of the pathwise Ritz projection $u_p$ from the approximate solution $u_h$ according to
\begin{equation*}
\|u(t) - u_h(t)\|_{H(t)} \leq \|u(t) - u_p(t)\|_{H(t)} + \|u_p(t) - u_h(t)\|_{H(t)}, \quad t \in (0,T).
\end{equation*}
For ease of presentation the dependence on $t$ is often skipped in the sequel.

As a consequence of Theorem \ref{error in projection} and the regularity assumption \eqref{4.26}, we have
\begin{equation*}
\sup_{t \in (0,T)} \|u - u_p\|_{H(t)} \leq c h^2 \sup_{t \in (0,T)}\| u \|_{L^2(\Omega, H^2(\Gt))} < \infty.
\end{equation*}
Hence, it is sufficient to show a corresponding estimate for 
\begin{equation*}
\theta := u_p - u_h \in L^2(\Omega, S_h^l).
\end{equation*}
Here and in the sequel we set  $\vfi_h=\phi^l_h$ for $\phi_h \in L^2(\Omega, S_h)$.

Utilizing \eqref{semidiscrete problem} and the transport formulae \eqref{eq:TPF1} in Lemma~\ref{lem:PT} 
and \eqref{eq:TPF2} in  Lemma~\ref{lem:TPF2}, respectively, we obtain
\begin{equation}
\frac{d}{dt}m(u_h, \vfi_h) + a(u_h, \vfi_h) - m(u_h, \mdh\vfi_h) = F_1(\vfi_h), 
\quad \forall \vfi_h \in L^2(\Omega, S_h^l) \label{7.1}
\end{equation}
denoting 
\begin{align}
  \nonumber
F_1(\vfi_h) &:= m(\mdh u_h,\vfi_h)-m_h(\mdh U_h,\phi_h) \\
& \qquad +a(u_h, \vfi_h) - a_h(U_h, \phi_h)+ g(v_h; u_h, \vfi_h) - g_h(V_h; U_h, \phi_h). \label{eq:F1FIRST} 
\end{align}
Exploiting that $u$ solves  Problem~\ref{prob:WF2} and thus satisfies \eqref{3.8}
together with the Galerkin orthogonality \eqref{proj res}
and rearranging terms, we derive
\begin{equation}
\frac{d}{dt}m(u_p, \vfi_h) + a(u_p, \vfi_h) - m(u_p, \mdh\vfi_h) = F_2(\vfi_h), \quad \forall \vfi_h \in L^2(\Omega, S_h^l) \label{7.3}
\end{equation}
denoting
\begin{equation}
F_2(\vfi_h) := m(u,\md \vfi_h- \mdh\vfi_h)+ m(u-u_p, \mdh\vfi_h)-\frac{d}{dt}m(u-u_p, \vfi_h). \label{7.4}
\end{equation}

We subtract \eqref{7.1} from \eqref{7.3} to get
\begin{equation}\label{theta eq}
\frac{d}{dt}m(\theta, \vfi_h) + a(\theta, \vfi_h) - m(\theta, \mdh \vfi_h) 
= F_2(\vfi_h) - F_1(\vfi_h) \quad \forall \vfi_h \in L^2(\Omega, S_h^l). 
\end{equation}
Inserting the test function $\varphi_h = \theta\in  L^2(\Omega, S_h^l)$ into (\ref{theta eq}),
utilizing the transport Lemma~\ref{lem:TPF2}, 
and integrating  in time, we obtain
\begin{equation*}
{\textstyle \frac{1}{2} } \| \theta(t) \|^2_{H(t)} - {\textstyle \frac{1}{2}} \| \theta(0) \|^2_{H(0)}+
\intt  a(\theta, \theta) + \intt  g(\w_h; \theta,\theta) = \int_0^t F_2(\theta) - F_1(\theta).
\end{equation*}
Hence,  Assumption~\ref{ass1} 
together with \eqref{eq:GBOUND} in Remark \ref{rm: divh bound} provides the estimate
\begin{equation}\label{mtheta}
\begin{array}{rl}
 {\textstyle \frac{1}{2} } \displaystyle \|\theta(t)\|^2 + \alpha_{\min} \intt \| \Ng \theta\|^2_{H(t)} \leq & \\
  {\textstyle \frac{1}{2} } \|\theta(0)\|^2 +  & \displaystyle c \intt  \|\theta\|^2_{H(t)} + \intt  |F_1(\theta)| + |F_2(\theta)|.
  \end{array}
\end{equation}
Lemma \ref{bilinear errors} allows to control the geometric error terms in $|F_1(\theta)|$ according to
\begin{equation*} 
 |F_1(\theta)| \leq ch^2\|\mdh u_h \|_{H(t)} \| \theta_h \|_{H(t)} +ch^2\| u_h \|_{V(t)} \| \theta_h \|_{V(t)}.
\end{equation*}
The transport formula~\eqref{eq:TPF2} provides the identity
\[
F_2(\vfi_h) = m(u,\md \vfi_h- \mdh\vfi_h) - m(\mdh(u-u_p),\vfi_h) - g(v_h;u-u_p,\vfi_h)
\]
from which Lemma \ref{l5.6}, Theorem~\ref{thm:MATDERRI}, and Theorem~\ref{error in projection} imply
\begin{equation*}
|F_2(\theta)| \leq  ch^2\|u\|_{H(t)}\|\theta_h\|_{V(t)}
+ ch^2(\|u\|_{L^2(\Omega, H^2(\Gamma(t)))} +\|\md u\|_{L^2(\Omega, H^2(\Gamma(t)))})\|\theta_h\|_{H(t)}. 
\end{equation*}
We insert these estimates into \eqref{mtheta}, rearrange terms,
and apply Young's inequality to show
that for each $\varepsilon > 0$ there is a positive constant $c(\varepsilon)$ such that 
\begin{multline*}
\frac{1}{2}\| \theta(t) \|^2_{H(t)} + (\alpha_{\min}-\varepsilon) \intt \|\Ng \theta\|^2_{H(t)}
\leq
\frac{1}{2}\| \theta(0) \|^2_{H(0)}+ c(\varepsilon) \intt  \| \theta \|^2_{H(t)} \\
+ c(\varepsilon) h^4\intt \left( \|u\|^2_{L^2(\Omega, H^2(\Gamma(t)))}
+ \|\md u\|^2_{L^2(\Omega, H^2(\Gamma(t)))} + \| \mdh u \|^2_{H(t)} + \| u_h \|^2_{V(t)} \right).
\end{multline*}
For sufficiently small $\varepsilon>0$, Gronwall's lemma implies
\begin{equation}
\sup_{t \in (0,T)} \| \theta(t) \|^2_{H(t)} + \int_0^T \| \Ng \theta \|^2_{H(t)} \leq c \|\theta(0)\|^2_{H(0)} + ch^4C_h, \label{7.8}
\end{equation}
where
\begin{equation*}
C_h = \int_0^T [\|u\|^2_{L^2(\Omega, H^2(\Gamma(t))} +
\|\md u\|^2_{L^2(\Omega, H^2(\Gamma(t))} + \| \mdh u \|^2_{H(t)} + \| u_h \|^2_{V(t)}].
\end{equation*}
Now the consistency assumption \eqref{4.27} yields $\|\theta(0)\|^2_{H(0)} \leq c h^4$ while
the stability result \eqref{apriorilifting} in Remark~\ref{rem:STABLIFT} together with
the regularity assumption leads to \eqref{4.26} $C_h \leq C <\infty$
with a constant $C$ independent of $h$.
This completes the proof.
\end{proof}

\begin{xrem}
Observe that without Assumption \ref{ass2} we still get the $H^1$-bound
\[
\left(\int_0^T \| \Ng(u(t) - u_h(t)) \|^2_{H(t)} \right)^{1/2}\leq C h.
\]
\end{xrem}

The following error estimate for the expectation 
\[
E[u] =\int_{\Omega}u 
\]
is an immediate consequence of Theorem~\ref{L2error} and the Cauchy-Schwarz inequality.
\begin{theorem} \label{theo:EXERR}
Under the assumptions and with the notation of Theorem~\ref{L2error} we have the error estimate
\begin{equation} \label{eq:EXPERR}
 \sup_{t\in (0,T)}\|E [u(t)] - E [u_h(t)]\|_{L^2(\Gamma(t))}\leq Ch^2.
\end{equation}
\end{theorem}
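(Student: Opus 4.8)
The plan is to exploit the linearity of the expectation together with the fact that the $H(t)=L^2(\Omega,L^2(\Gt))$ norm controls the $L^2(\Gt)$ norm of the expectation, so that the assertion drops out of the pathwise (in $\omega$) $L^2$-in-time-and-space bound already established in Theorem~\ref{L2error}. Since the expectation is a contraction from $H(t)$ into $L^2(\Gt)$, nothing beyond Cauchy--Schwarz is required.

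First I would use linearity to write, for fixed $t$,
\[
E[u(t)]-E[u_h(t)] = E\bigl[u(t)-u_h(t)\bigr] = \int_\Omega \bigl(u(t)-u_h(t)\bigr),
\]
which is a well-defined element of $L^2(\Gt)$ because $u(t)-u_h(t)\in H(t)$. Next, for almost every $x\in\Gt$, I would apply the Cauchy--Schwarz inequality with respect to the probability measure $\Pp$, using $\Pp(\Omega)=1$, to obtain the pointwise bound
\[
\left| \int_\Omega \bigl(u(t)-u_h(t)\bigr)(\omega,x) \right|^2 \leq \int_\Omega \bigl| \bigl(u(t)-u_h(t)\bigr)(\omega,x) \bigr|^2 .
\]

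Integrating this inequality over $\Gt$ and interchanging the order of integration by Fubini's theorem (legitimate since $u(t)-u_h(t)\in H(t)$) yields
\[
\|E[u(t)]-E[u_h(t)]\|_{L^2(\Gt)}^2 \leq \int_{\Gt}\int_\Omega \bigl| u(t)-u_h(t) \bigr|^2 = \|u(t)-u_h(t)\|_{H(t)}^2,
\]
that is, $\|E[u(t)]-E[u_h(t)]\|_{L^2(\Gt)}\leq \|u(t)-u_h(t)\|_{H(t)}$. Finally I would take the supremum over $t\in(0,T)$ and invoke the estimate \eqref{l2 error estimate} of Theorem~\ref{L2error}, giving the claimed bound $\sup_{t\in(0,T)}\|E[u(t)]-E[u_h(t)]\|_{L^2(\Gt)}\leq Ch^2$ with the same constant $C$.

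The argument is essentially immediate, as the theorem statement anticipates; the only point deserving a word of care — and the nearest thing to an obstacle — is the measurability and integrability needed to justify the Fubini interchange and to ensure that $E[u-u_h]$ is a genuine $L^2(\Gt)$ function. Both are guaranteed by $u(t),u_h(t)\in H(t)$, so no further work is needed beyond the contraction estimate above.
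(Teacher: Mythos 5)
Your proof is correct and follows exactly the route the paper intends: the paper states that Theorem~\ref{theo:EXERR} is an immediate consequence of Theorem~\ref{L2error} and the Cauchy--Schwarz inequality, which is precisely the contraction argument $\|E[u(t)]-E[u_h(t)]\|_{L^2(\Gamma(t))}\leq \|u(t)-u_h(t)\|_{H(t)}$ that you spell out. Your version simply makes the measurability and Fubini details explicit; no difference in substance.
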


We close this section with an error estimate for the Monte-Carlo approximation of the expectation $E [u_h]$.
Note that $E[u_h](t)=E[u_h(t)]$, because the probability measure does not depend on time $t$.
For each fixed $t\in (0,T)$ and some $M \in \mathbb{N}$, 
the Monte-Carlo approximation $E_M [u_h](t)$ of  $E [u_h](t)$ is defined by 
\begin{equation}\label{MCerror}
 E_M[u_h(t)]:=\frac{1}{M} \sum_{i=1}^{M} u_h^i(t) \in  L^2(\Omega^M,L^2(\Gamma(t))),
\end{equation}
where $u_h^i$ are 
independent identically distributed copies of the random field $u_h$.

A  proof of the following well-known result can be found, e.g. in \cite[Theorem 9.22]{LPS}.
\begin{lemma} \label{lem:MC}
For each fixed $t\in (0,T)$, $w \in L^2(\Omega,L^2(\Gamma(t)))$, and any $M \in \mathbb{N}$
we have the error estimate
\begin{equation} \label{MC}
\textstyle \| E[w] - E_M[w] \|_{L^2(\Omega^M,L^2(\Gamma(t)))} 
= \frac{1}{\sqrt{M}} 
{\mathrm Var}[w]^{\frac{1}{2}} 
\leq \frac{1}{\sqrt{M}} \| w \|_{L^2(\Omega,L^2(\Gamma(t)))}
\end{equation}
with ${\mathrm Var}[w]$ denoting the variance $Var[w] = E[\| E[w] - w \|_{L^2(\Omega,\Gamma(t))}^2]$ of $w$.
\end{lemma}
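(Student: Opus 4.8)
The plan is to follow the classical variance computation for Monte-Carlo estimators, carried out directly in the Bochner space $L^2(\Omega, L^2(\Gt))$. First I would abbreviate $\mu := E[w] \in L^2(\Gt)$, which is a deterministic element because the expectation integrates out $\omega$, and rewrite the sampling error from \eqref{MCerror} as a centered average
\[
E[w] - E_M[w] = \frac{1}{M}\sum_{i=1}^M (\mu - w^i),
\]
where the $w^i$ are the independent identically distributed copies of $w$.

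Next I would square the $L^2(\Omega^M, L^2(\Gt))$ norm and expand the finite sum using the spatial $L^2(\Gt)$ inner product,
\[
\| E[w] - E_M[w] \|^2_{L^2(\Omega^M, L^2(\Gt))} = \frac{1}{M^2} \sum_{i,j=1}^M \int_{\Omega^M} (\mu - w^i, \mu - w^j)_{L^2(\Gt)} \, d\mathbb{P}^M.
\]
The decisive step is to invoke independence: for $i \neq j$ the copies $w^i$ and $w^j$ are independent, so the off-diagonal integrals factorize into $(E[\mu - w^i], E[\mu - w^j])_{L^2(\Gt)}$, and each factor vanishes since $E[\mu - w^i] = \mu - E[w] = 0$. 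Only the $M$ diagonal terms survive, each equal to $E[\|\mu - w\|^2_{L^2(\Gt)}] = \mathrm{Var}[w]$, so the right-hand side reduces to $\frac{1}{M^2}\cdot M \cdot \mathrm{Var}[w]$. Taking square roots yields the claimed equality $\frac{1}{\sqrt{M}}\,\mathrm{Var}[w]^{1/2}$.

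The final upper bound then follows from the elementary identity $\mathrm{Var}[w] = E[\|w\|^2_{L^2(\Gt)}] - \|E[w]\|^2_{L^2(\Gt)} \leq E[\|w\|^2_{L^2(\Gt)}] = \|w\|^2_{L^2(\Omega, L^2(\Gt))}$, obtained by expanding $\|\mu - w\|^2_{L^2(\Gt)}$ and using $E[(\mu, w)_{L^2(\Gt)}] = \|\mu\|^2_{L^2(\Gt)}$.

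The computation is entirely routine, and the only point requiring any care is the interchange of the spatial pairing $(\cdot,\cdot)_{L^2(\Gt)}$ with the product-space expectation $\int_{\Omega^M}(\cdot)\,d\mathbb{P}^M$ together with the factorization of expectations of independent Bochner-valued random variables. Since $w \in L^2(\Omega, L^2(\Gt))$ is square-integrable, Fubini's theorem legitimizes exchanging the order of integration, and the independence of the $w^i$ makes the off-diagonal terms vanish as above; this mild measure-theoretic bookkeeping is the main (and essentially only) obstacle.
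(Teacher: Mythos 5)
Your proof is correct, and it is essentially the canonical argument: the paper itself does not prove this lemma but instead cites \cite[Theorem 9.22]{LPS}, and the proof given there is exactly your computation --- center the estimator, expand the squared Bochner norm, kill the off-diagonal terms by independence (via Fubini and the factorization $E[(X,Y)_{L^2(\Gamma(t))}] = (E[X],E[Y])_{L^2(\Gamma(t))}$ for independent square-integrable Hilbert-space-valued variables), and bound the variance by the second moment. Nothing is missing; the measure-theoretic points you flag (Fubini and the factorization for Bochner-valued independent copies) are precisely the ones that make the Hilbert-space version go through verbatim as in the scalar case.
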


\begin{theorem} \label{theo:MCESFEM}
Under the assumptions and with the notation of Theorem~\ref{L2error} we have the error estimate
\[
  \sup_{t \in (0,T)} \|E[u](t)-E_M[u_h](t) \|_{L^2(\Omega^M,L^2(\Gamma(t)))} 
  \leq C\left( h^2 + \textstyle \frac{1}{\sqrt{M}}\right)
\]
with a constant $C$ independent of $h$ and $M$.
\end{theorem}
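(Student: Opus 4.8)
The plan is to split the total error by a triangle inequality into a deterministic discretization contribution and a statistical sampling contribution, and then to bound each separately using results already available. Fixing $t \in (0,T)$, I would write
\[
\|E[u](t) - E_M[u_h](t)\|_{L^2(\Omega^M, L^2(\Gamma(t)))}
\leq \|E[u](t) - E[u_h](t)\|_{L^2(\Omega^M, L^2(\Gamma(t)))}
+ \|E[u_h](t) - E_M[u_h](t)\|_{L^2(\Omega^M, L^2(\Gamma(t)))},
\]
using that $E[u_h](t) = E[u_h(t)]$ since the probability measure is time independent. The first term involves only deterministic functions, as both expectations are independent of the sample in $\Omega^M$; hence its $L^2(\Omega^M, L^2(\Gamma(t)))$-norm coincides with its $L^2(\Gamma(t))$-norm, and Theorem~\ref{theo:EXERR} bounds it by $Ch^2$ uniformly in $t$.

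For the sampling term I would apply the Monte-Carlo estimate of Lemma~\ref{lem:MC} with $w = u_h(t) \in L^2(\Omega, L^2(\Gamma(t)))$, which immediately gives
\[
\|E[u_h](t) - E_M[u_h](t)\|_{L^2(\Omega^M, L^2(\Gamma(t)))}
\leq \frac{1}{\sqrt{M}} \, \|u_h(t)\|_{\Ht}.
\]
The remaining task is to control $\|u_h(t)\|_{\Ht}$ uniformly in $t$ by a constant independent of $h$ and $M$. This is the one point requiring care (and is the only genuine obstacle), because the stability bound \eqref{apriorilifting} in Remark~\ref{rem:STABLIFT} is formulated in the space-time norm $\|\cdot\|_{W(V,H)}$ rather than pointwise in $t$, so it does not directly yield a uniform pointwise bound.

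To circumvent this I would instead estimate $\|u_h(t)\|_{\Ht}$ through the exact solution by the triangle inequality, $\|u_h(t)\|_{\Ht} \leq \|u(t)\|_{\Ht} + \|u(t) - u_h(t)\|_{\Ht}$. The second summand is bounded by $Ch^2$ via Theorem~\ref{L2error}, while the first is uniformly bounded in $t$ by the regularity assumption \eqref{4.26} together with the continuous embedding $H^2(\Gamma(t)) \hookrightarrow L^2(\Gamma(t))$ (with constant uniform in $t$ by the assumed regularity of the evolution). Thus $\sup_{t \in (0,T)} \|u_h(t)\|_{\Ht} \leq C$ with $C$ independent of $h$ and $M$ for $h$ sufficiently small. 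Combining the two contributions and taking the supremum over $t \in (0,T)$ then yields the claimed bound $C(h^2 + M^{-1/2})$, which completes the argument.
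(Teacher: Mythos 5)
Your proposal is correct and follows essentially the same route as the paper: the same triangle-inequality splitting into the expectation error (bounded by Theorem~\ref{theo:EXERR}) and the sampling error (bounded by Lemma~\ref{lem:MC}), with the required uniform bound on $\sup_{t}\|u_h(t)\|_{H(t)}$ obtained exactly as in the paper, namely via $\|u_h(t)\|_{H(t)} \leq \|u(t)\|_{H(t)} + \|u(t)-u_h(t)\|_{H(t)}$, Theorem~\ref{L2error}, and the regularity assumption \eqref{4.26}. Your observation that the space-time stability bound \eqref{apriorilifting} cannot be used directly for a pointwise-in-time bound is a valid and well-spotted justification for this detour, which the paper makes implicitly.
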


\begin{proof}
Let us first note that
\begin{equation} \label{eq:SUP}
 \sup_{t\in (0,T)}\|u_h\|_{H(t)}\leq (1+C)\sup_{t\in (0,T)} \|u\|_{H(t)}< \infty
\end{equation}
follows from the triangle inequality and Theorem~\ref{L2error}.
For arbitrary fixed $t\in (0,T)$ the triangle inequality yields
\[
\begin{array}{rl}
\|E[u](t)-E_M[u_h](t) \|_{L^2(\Omega^M,L^2(\Gamma(t)))} \leq &\\[3mm]
\|E[u](t)-E[u_h](t) \|_{L^2(\Gamma(t)))} \;\; + &
\|E[u_h(t)]-E_M[u_h(t)] \|_{L^2(\Omega^M,L^2(\Gamma(t)))}
\end{array}
\]
so that the assertion follows from Theorem~\ref{theo:EXERR}, Lemma~\ref{lem:MC}, and \eqref{eq:SUP}.
\end{proof}

\section{Numerical Experiments}
\subsection{Computational aspects} \label{sec:COMPAS}
In the following numerical computations we consider a fully discrete scheme
as resulting from an implicit Euler discretization of the semi-discrete Problem~\ref{prob:ESFEM}.
More precisely, we select a time step $\tau>0$ with $K\tau =T$, set 
\[
\chi_j^k=\chi_j(t_k),\quad k=0,\dots, K,
\]
with $t_k=k\tau$, and approximate $U_h(\omega,t_k)$ by 
\[
U_h^k(\omega)=\sum_{j=1}^J U_j^k(\omega) \chi_j^k,\qquad k=0,\dots,J,
\]
with unknown coefficients $U_j^k(\omega)$ characterized by the initial condition
\[
 U_h^0=\sum_{j=1}^J  U_{h,0}(X_j(0)) \chi_j^0
\]
and the fully discrete scheme 
\begin{equation}\label{eq:DISCPRO}
\frac{1}{\tau}\big(m_h^k(U_h^{k},\chi_j^k) -m_h^{k-1}(U_h^{k-1}, \chi_j^{k-1}) \big) + a_h^k(U_h^k,\chi_j^k) 
=  \int_{\Omega}\int_{\Gamma(t_k)} f(t_k) \chi_j^k
\end{equation}
for $k=1,\dots,J$. Here, for $t=t_k$ the time-dependent bilinear forms $m_h(\cdot,\cdot)$ and $a_h(\cdot,\cdot)$
are denoted by $m_h^k(\cdot,\cdot)$ and $a_h^k(\cdot,\cdot)$, respectively.
The fully discrete scheme \eqref{eq:DISCPRO} is obtained 
from an extension of \eqref{semidiscrete problem} to non-vanishing 
right-hand sides $f\in {\mathcal C}((0,T),H(t))$  by inserting $\varphi = \chi_j$, 
exploiting \eqref{tr prop}, and replacing the time derivative by the backward difference quotient.
As  $\alpha$ is defined on the whole ambient space in the subsequent numerical experiments,
the inverse lift $\alpha^{-l}$ occurring in $a_h(\cdot,\cdot)$ is replaced by $ \alpha|_{\Gamma_h(t)}$, and the integral is computed using a quadrature formula of degree 4.

The expectation $E[U_h^k]$ is approximated by the Monte-Carlo method
\[
E_M[U_h^k]=\frac{1}{M}\sum_{i=1}^M U_h^k(\omega^i), \qquad k=1,\dots,K,
\]
with independent, uniformly distributed samples $\omega^i \in \Omega$.
For each sample $\omega^i$, the evaluation of $U_h^k(\omega^i)$ from
the initial condition and \eqref{eq:DISCPRO}
amounts to the solution of $J$ linear systems which is performed by 
iteratively by a preconditioned conjugate gradient method up to the accuracy $10^{-8}$.

From our theoretical findings stated in Theorem~\ref{theo:MCESFEM} 
and the fully discrete deterministic results in \cite[Theorem~2.4]{DziEll12}, 
we expect that the discretization error
\begin{equation} \label{eq:DREAM}
 \sup_{k=0,\dots,K} \| E[u](t_k) - E_M[U_h^k] \|_{L^2(\Omega^M,L^2(\Gamma_h(t_k)))}
\end{equation}
behaves like ${\mathcal O}\left(h^2 + \textstyle \frac{1}{\sqrt{M}} + \tau\right)$. 
This conjecture will be investigated in our numerical experiments.
To this end, the integral over $\Omega^M$ in \eqref{eq:DREAM} is always approximated by 
the average of 20 independent, identically distributed sets of samples.

The implementation was carried out in the framework of {\sc Dune} 
(Distributed Unified Numerics Environment)~\cite{BasBlaDed08-a,BasBlaDed08-b,DedKloNol10},
and the corresponding code is available at \url{https://github.com/tranner/dune-mcesfem}.

\subsection{Moving curve}

We consider the ellipse 
\begin{align*}
  \Gamma(t) = \left\{ x=(x_1,x_2)\in \R^2\;\middle | \;  \frac{x_1^2}{a(t)} + \frac{x_2^2}{b(t)}= 1 \right\},
 \qquad t \in [0,T],
\end{align*}
with oscillating axes $a(t) = 1 + \frac{1}{4} \sin(t)$, $b(t) = 1 + \frac{1}{4} \cos(t)$, and $T=1$.
The random diffusion coefficient $\alpha$ occurring in $a_h(\cdot,\cdot)$
is given by
\begin{align*}
  \alpha( x, \omega ) = 1 + \frac{Y_1(\omega)}{4} \sin( 2 x_1 ) + \frac{Y_2(\omega)}{4} \sin( 2 x_2 ),
\end{align*}
where  $Y_1$ and $Y_2$
stand for independent, uniformly distributed random variables on $\Omega=(-1,1)$.
The  right-hand side $f$ in \eqref{eq:DISCPRO} is selected in such a way that for each $\omega \in \Omega$ 
the exact solution of the resulting path-wise problem is given by
\begin{align*}
  u( x, t, \omega ) 
  = \sin(t) \big\{ \cos(3x_1) + \cos(3x_2) + Y_1(\omega) \cos(5 x_1) + Y_2(\omega) \cos( 5 x_2 ) \big\},
\end{align*}
and we set $u_0(x,\omega) = u( x, 0, \omega ) = 0$.

The initial polygonal approximation $\Gamma_{h,0}$ of $\Gamma(0)$ is depicted in Figure~\ref{fig:GRID1}
for the mesh sizes $h=h_j$, $j=0,\dots,4$, that are used in our computations.
\begin{figure}[htb] \label{fig:GRID1}
  \centering
  \includegraphics[width=0.20\textwidth]{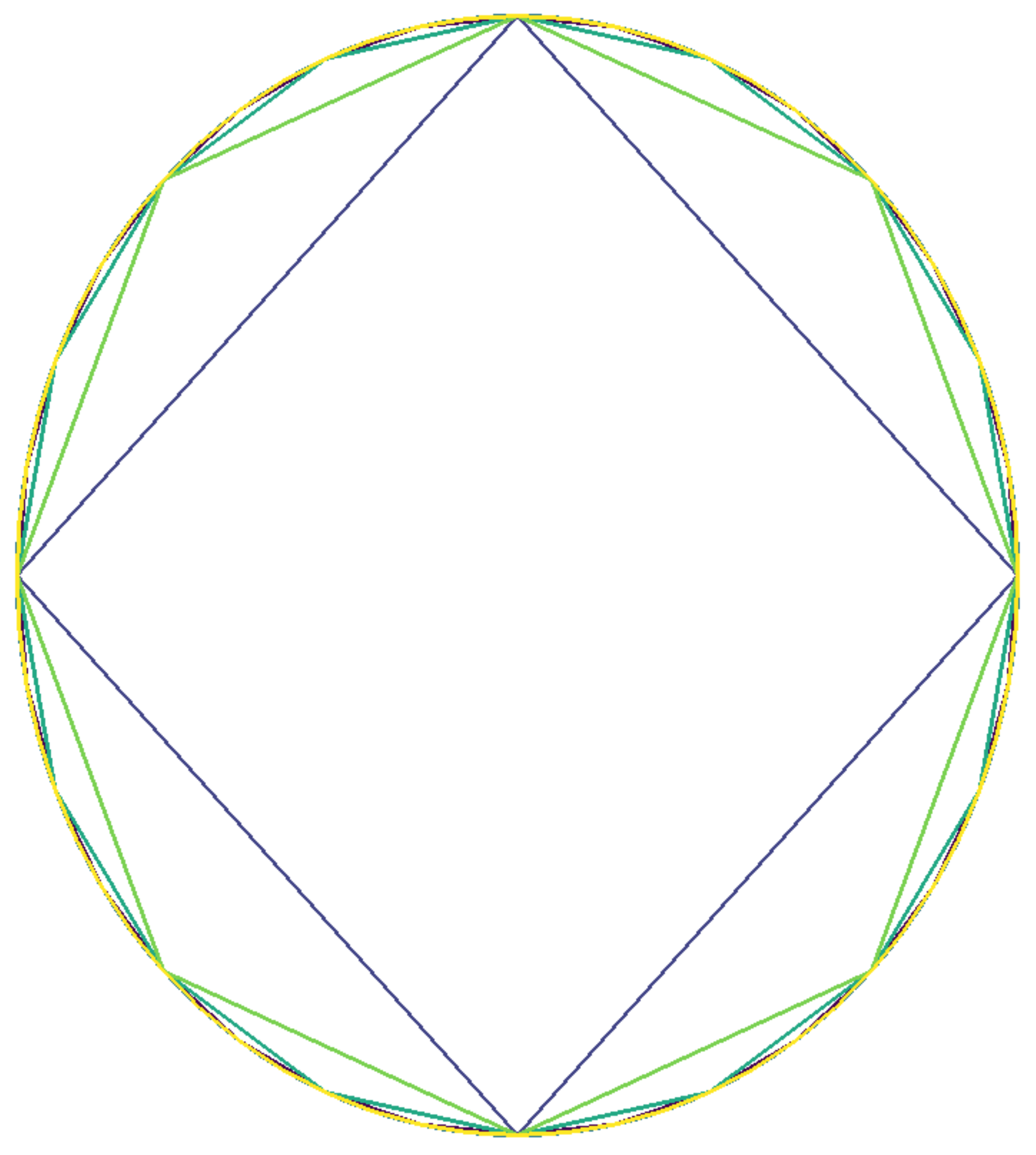}
  \caption{Polygonal approximation $\Gamma_{h,0}$ of $\Gamma(0)$ for $h=h_0,\dots, h_4$.}
\end{figure}
We select the corresponding time step sizes
$\tau_0=1$, $\tau_j=\tau_{j-1}/4$
and the corresponding numbers of samples $M_1=1$, $M_j=16 M_{j-1}$ for $j=1,\dots,4$. 
The resulting discretization errors \eqref{eq:DREAM} are
reported in Table~\ref{tab:2d-evolve-opt-l2-results} suggesting
the optimal behavior ${\mathcal O}(h^2 + M^{-1/2} + \tau)$.

\begin{table}[htb]\label{tab:2d-evolve-opt-l2-results}
  \centering
  \begin{tabular}{ccc|c|ccc}
    $h$ & $M$ & $\tau$ & Error & eoc($h$) & eoc($M$) & eoc($\tau$)\\
   \hline
$1.500000$ & $1$ & $1$ & $3.00350$ & --- & --- & --- \\
$0.843310$ & $16$ & $4^{ -1 }$ & $2.23278 \cdot 10^{-1}$ & $4.51325$ & $-0.93743$ & $1.87487$ \\
$0.434572$ & $256$ & $4^{ -2 }$ & $1.86602 \cdot 10^{-1}$ & $0.27066$ & $-0.06472$ & $0.12944$ \\
$0.218962$ & $4\,096$ & $4^{ -3 }$ & $4.88096 \cdot 10^{-2}$ & $1.95642$ & $-0.48368$ & $0.96736$ \\
$0.109692$ & $65\,536$ & $4^{ -4 }$ & $1.29667 \cdot 10^{-2}$ & $1.91768$ & $-0.47809$ & $0.95618$ \\
  \end{tabular}
  \caption{Discretization errors for a moving curve in $\R^2$.}
\end{table}

\subsection{Moving surface}
We consider the ellipsoid
\begin{align*}
   \Gamma(t) = \left\{ x=(x_1,x_2,x_3)\in \R^3\;\middle | \; \frac{x_1^2}{a(t)} + x_2^2 + x_3^2 = 1  \right\},
 \qquad t \in [0,T],
\end{align*}
with oscillating $x_1$-axis $a(t) = 1 + \tfrac{1}{4} \sin(t)$ and $T=1$.
The random diffusion coefficient $\alpha$ occurring in $a_h(\cdot,\cdot)$
is given by
\begin{align*}
  \alpha( x, \omega ) = 1 + x_1^2 + Y_1( \omega ) x_1^4 + Y_2( \omega ) x_2^4,
\end{align*}
where  $Y_1$ and $Y_2$
denote independent, uniformly distributed random variables on $\Omega=(-1,1)$.
The  right-hand side $f$ in \eqref{eq:DISCPRO} is chosen such that for each $\omega \in \Omega$ 
the exact solution of the resulting path-wise problem is given by
\begin{align*}
  u( x, t, \omega ) =  \sin( t ) x_1 x_2 + Y_1( \omega) \sin(2t) x_1^2 + Y_2( \omega ) \sin(2t) x_2,
\end{align*}
and we set $u_0(x,\omega) = u( x, 0, \omega ) =  0$.

The initial triangular approximation $\Gamma_{h,0}$ of $\Gamma(0)$ is depicted in Figure~\ref{fig:GRID2}
for the mesh sizes $h=h_j$, $j=0,\dots,3$.
\begin{figure}[htb] \label{fig:GRID2}
  \centering
  \includegraphics[width=0.22\textwidth]{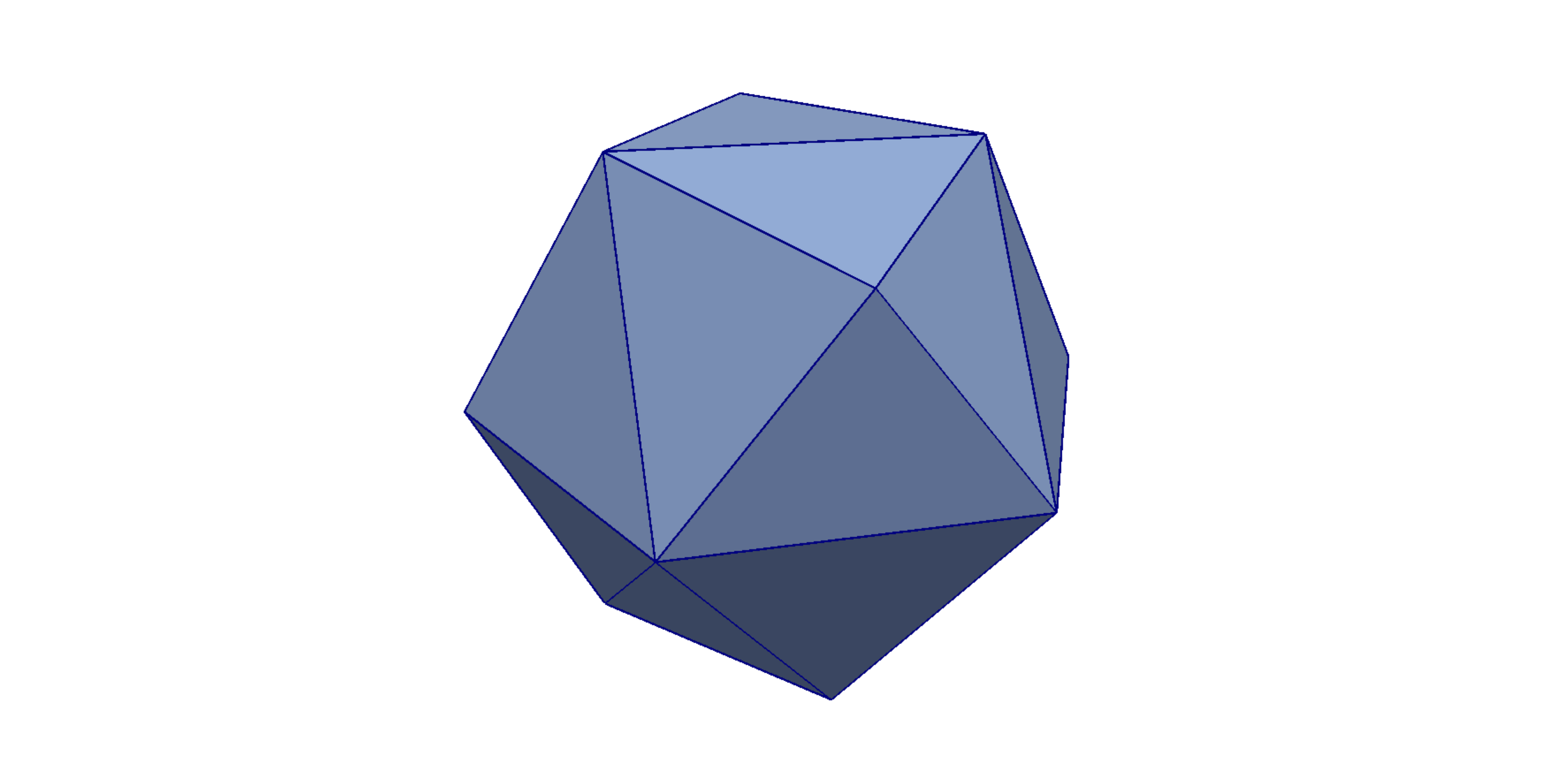}
  \includegraphics[width=0.22\textwidth]{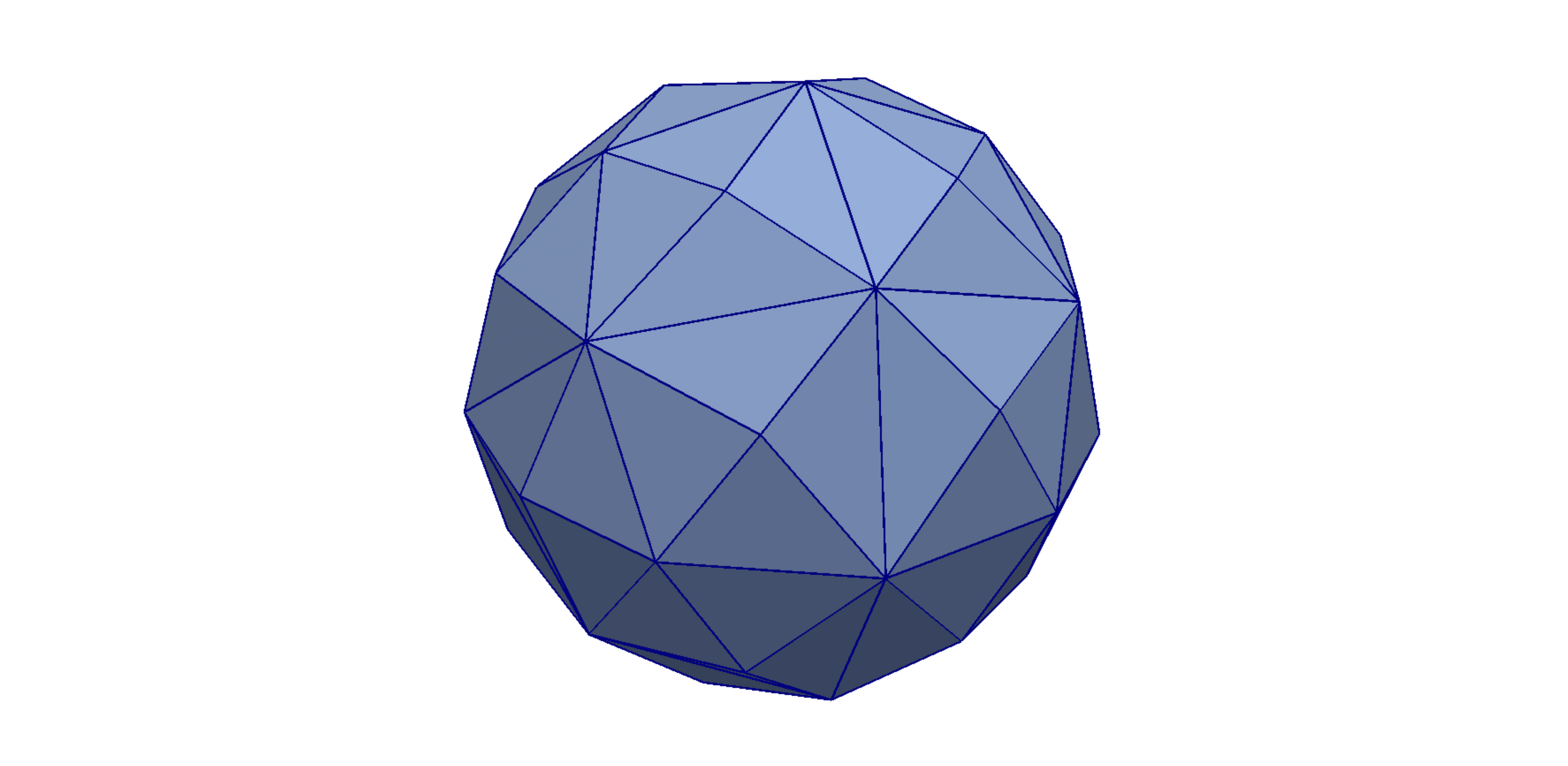}
  \includegraphics[width=0.22\textwidth]{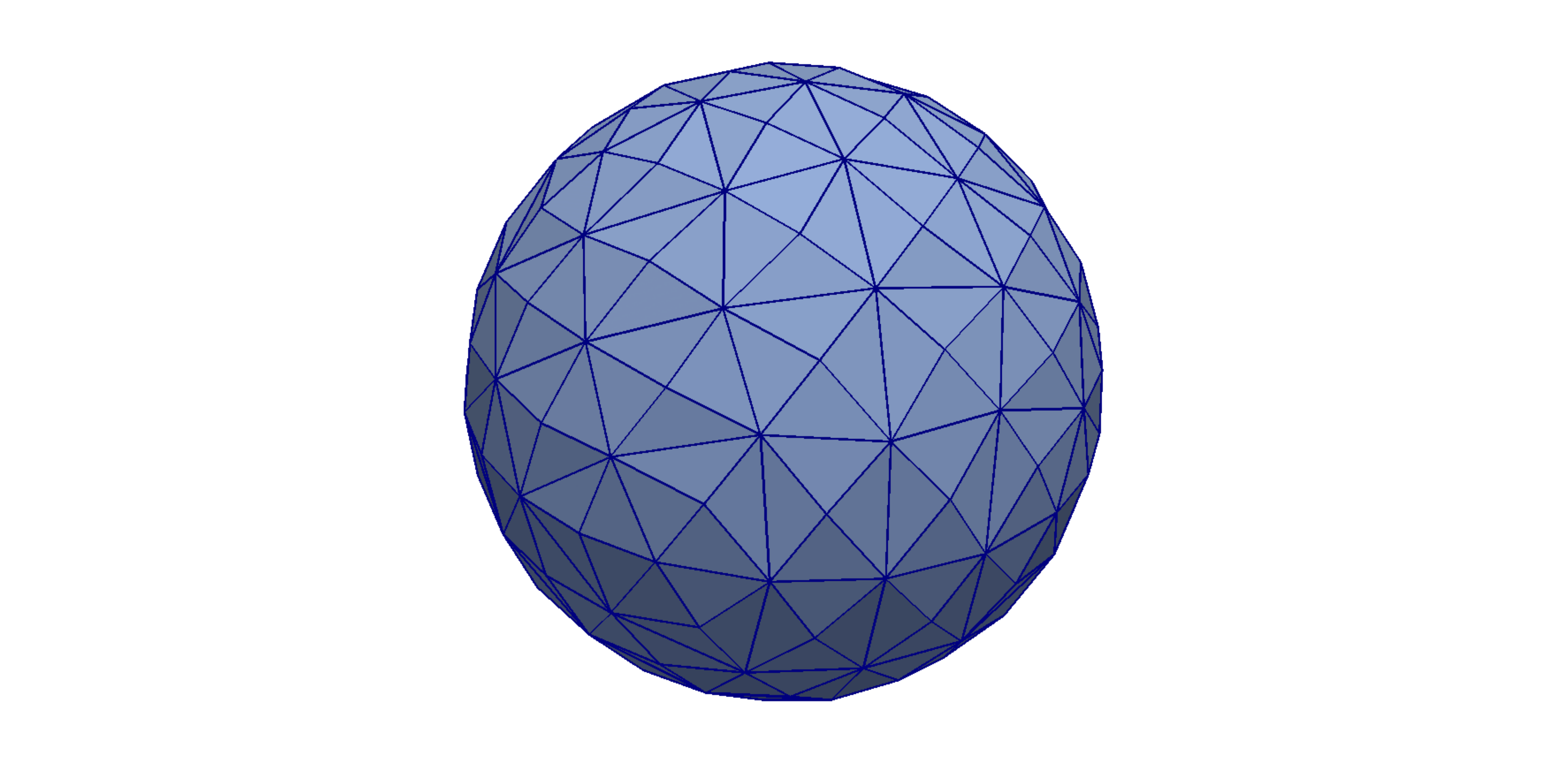}
  \includegraphics[width=0.22\textwidth]{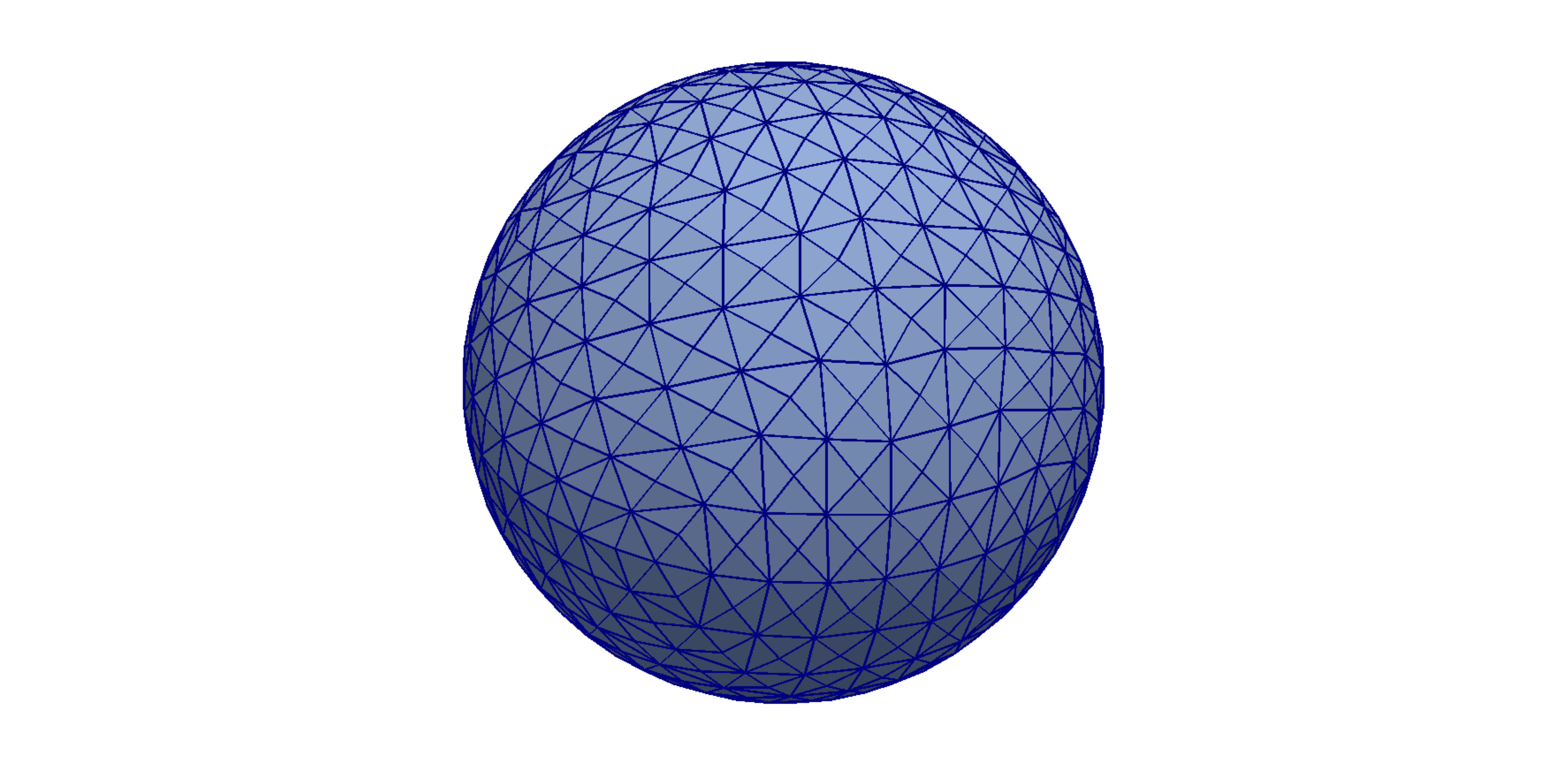}
    \caption{Triangular approximation $\Gamma_{h,0}$ of $\Gamma(0)$ for $h=h_0,\dots, h_3$.}
\end{figure}
\begin{table}[htb]
  \centering
  \begin{tabular}{ccc|c|ccc}
    $h$ & $M$ & $\tau$ & Error & eoc($h$) & eoc($M$) &  eoc($\tau$) \\
    \hline
$1.276870$ & $1$ & $1$ & $9.91189 \cdot 10^{-1}$ & --- & --- & --- \\
$0.831246$ & $16$ & $4^{ -1 }$ & $1.70339 \cdot 10^{-1}$ & $4.10285$ & $-0.63519$ & $1.27037$ \\
$0.440169$ & $256$ & $4^{ -2 }$ & $4.61829 \cdot 10^{-2}$ & $2.05293$ & $-0.47075$ & $0.94149$ \\
$0.222895$ & $4\,096$ & $4^{ -3 }$ & $1.18779 \cdot 10^{-2}$ & $1.99561$ & $-0.48977$ & $0.97954$ \\
  \end{tabular}
  \caption{Discretization errors for a moving surface in $\R^3$.}
  \label{tab:3d-opt-l2-results}
\end{table}
We select the corresponding time step sizes
$\tau_0=1$, $\tau_j=\tau_{j-1}/4$
and the corresponding numbers of samples $M_1=1$, $M_j=16 M_{j-1}$ for $j=1,2,3$. 
The resulting discretization errors \eqref{eq:DREAM} are
shown in Table~\ref{tab:3d-opt-l2-results}.
Again, we observe that the discretization error  behaves like ${\mathcal O}(h^2 + M^{-1/2} + \tau)$.
This is in accordance with our
theoretical findings stated in Theorem~\ref{theo:MCESFEM} 
and fully discrete deterministic results \cite[Theorem~2.4]{DziEll12}.

\bibliographystyle{plainnat}
\bibliography{refs.bib}

\end{document}